
\documentclass[12pt]{amsart}
\pagestyle{plain}
\textwidth=17cm
\textheight=22.5cm
\oddsidemargin=-0.5cm
\evensidemargin=0.2cm
\headheight=0.7cm
\headsep=0.9cm
\advance\voffset by -1.2cm

\parskip 5pt plus 3pt minus 2pt\relax

\pagestyle{headings}

\usepackage{amsmath}
\usepackage{graphicx}
\usepackage{multicol}

\usepackage{txmac,epsf,mypic,times,mathptm}
\makeatletter

\newtheorem{thm}{Theorem}
\newtheorem{cor}[thm]{Corollary}
\newtheorem{pro}[thm]{Proposition}

\theoremstyle{definition}
\newtheorem{lem}[thm]{Lemma}
\newtheorem{dfn}[thm]{Definition}
\newtheorem{ex}[thm]{Example}

\newtheorem{prob}[thm]{Problem}

\newenvironment{ack}{\section*{Acknowledgments}}{}
\def\br#1{\left\langle #1\right\rangle}

\numberwithin{equation}{section}
\numberwithin{thm}{section}

\begin{document}

\let\ul\underline
\let\ol\overline 
\let\reference\ref
\let\dl\delta
\let\Dl\Delta
\let\nb\nabla
\let\sg\sigma
\let\gm\gamma
\let\th\theta 
\let\kp\kappa 
\let\eps\varepsilon
\def\bZ{{\Bbb Z}}
\def\bN{{\mathbb{N}}}
\let\lm\lambda
\let\ap\alpha
\let\bt\beta
\def\lra{\longrightarrow}
\def\llra{\longleftrightarrow}
\let\es\enspace
\def\mc#1{\multicolumn{1}{|c|}{$#1$}}
\def\mC#1{\multicolumn{1}{|c||}{$#1$}}
\def\Y{\ry{1.3em}}
\def\point#1{{\picfillgraycol{0}\picfilledcircle{#1}{0.08}{}}}

\def\epsfsy#1#2{{\catcode`\_=11\relax\ifautoepsf\unitxsize#1\relax\else
\epsfysize#1\relax\fi\epsffile{#2.eps}}}
\def\epsfs#1#2{{\catcode`\_=11\relax\ifautoepsf\unitxsize#1\relax\else
\epsfxsize#1\relax\fi\epsffile{#2.eps}}}
\def\epsfsvy#1#2{{\vcbox{\epsfsy{#1}{#2}}}}
\def\vcbox#1{\setbox\@tempboxa=\hbox{#1}\parbox{\wd\@tempboxa}{\box
  \@tempboxa}}
\def\p{\epsfsv{2cm}}
\def\ang#1{\left\langle#1\right\rangle}
\def\vv#1#2#3{%
  \autoepsffalse
  \begin{tabular}{c}%
  \epsfsvy{4cm}{eps/t1-reiko#1br#2}\\%
  \ry{1.4em}$K_{#3}$\\[3mm]%
  \end{tabular}%
}

\def\@test#1#2#3#4{%
  \let\@tempa\go@
  \@tempdima#1\relax\@tempdimb#3\@tempdima\relax\@tempdima#4\unitxsize
    \relax
  \ifdim \@tempdimb>\z@\relax
    \ifdim \@tempdimb<#2%
      \def\@tempa{\@test{#1}{#2}}%
    \fi
  \fi
  \@tempa
}

\def\go@#1\@end{}
\newdimen\unitxsize
\newif\ifautoepsf\autoepsftrue

\unitxsize4cm\relax
\def\epsfsize#1#2{\epsfxsize\relax\ifautoepsf
  {\@test{#1}{#2}{0.1 }{4   }
		{0.2 }{3   }
		{0.3 }{2   }
		{0.4 }{1.7 }
		{0.5 }{1.5 }
		{0.6 }{1.4 }
		{0.7 }{1.3 }
		{0.8 }{1.2 }
		{0.9 }{1.1 }
		{1.1 }{1.  }
		{1.2 }{0.9 }
		{1.4 }{0.8 }
		{1.6 }{0.75}
		{2.  }{0.7 }
		{2.25}{0.6 }
		{3   }{0.55}
		{5   }{0.5 }
		{10  }{0.33}
		{-1  }{0.25}\@end
		\ea}\ea\epsfxsize\the\@tempdima\relax
		\fi
		}

\ea\def\csname subjclassname@2000\endcsname .%
  {\textup {2000} AMS Subject Classification:}%
\subjclass[2000
]{Primary 57M25, Secondary 20F36, 57M27}

\title{Exchange moves and 
braid representations of links}

\let\u\\
\author{REIKO SHINJO}
\address{Osaka City University Advanced Mathematical Institute, \u
\indent
3-138 Sugimoto 3-chome, Sumiyoshi-ku, Osaka 558-8585, Japan\u[-2mm]}
\email{reiko@suou.waseda.jp\u[-2mm]}

\author{Alexander Stoimenow}
\address{Department of Mathematics, \u
\indent Keimyung University, Daegu 704-701, Korea\u[-2mm]}
{
\email{stoimeno@stoimenov.net\u[-18pt]}%
\urladdr{http://stoimenov.net/stoimeno/homepage/}%
}

%
\thanks{The second author was supported by BK21 Project.}

\begin{abstract}
We prove that under fairly general conditions an iterated exchange
move gives infinitely many non-conjugate braids. As a consequence,
every knot has infinitely many conjugacy classes of $n$-braid
representations if and only if it has one admitting an exchange move. 
\end{abstract}

\maketitle
\section{Introduction}

The \em{braid groups} $B_n$ were introduced in the 1930s in the
work of Artin \cite{Artin}. An element $b\in B_n$ is called
an \em{$n$-braid}. Alexander \cite{Alex} related braids to
\em{links} in real 3-dimensional space, by means of a \em{closure}
operation $\hat{\,\,\,}$. In that realm, in became important to
understand the \em{braid representations} of a given link $L$,
i.e. those $b$ with $L=\hat b$. \em{Markov's theorem} relates these
representations by two moves, the \em{conjugacy} in the braid group,
and \em{(de)stabilization}, which passes between $b\in B_n$ and
$b\sg_{n}^{\pm 1}\in B_{n+1}$ (see e.g. \cite{Morton3}). Conjugacy
is, starting with Garside's \cite{G}, and later many others' work, now
relatively well group-theoretically understood. In contrast, the effect
of (de)stabilization on conjugacy classes of braid representations of
a given link is in general difficult to understand. Only in very special
situations can these conjugacy classes be well described, e.g.
\cite{B-M}.

In this paper we are concerned with the question when infinitely many
conjugacy classes of $n$-braid representations of a given link occur.
Birman and Menasco \cite{B} introduced a move called \em{exchange
move}, and proved that it necessarily underlies the switch between
many conjugacy classes of braid representations of $L$. We will
prove here that it is also sufficient for generating infinitely many
such classes, under a very mild restriction.

\begin{thm}\label{main'}
Let a link $L$ have an $n$-braid representation $b$ admitting
an exchange move, such that the permutation $\pi(b)$ satisfies
$\pi(b)(1)\ne 1$ and $\pi(b)(n)\ne n$. Then iterated exchange
moves on $b$ generate infinitely many non-conjugate braids of $L$.
\end{thm}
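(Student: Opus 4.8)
The plan is to realise the iterated exchange move as an explicit one-parameter family of braids in $B_n$ and then to separate infinitely many members of that family by a conjugacy invariant coming from a linear representation. First I would put $b$ into exchange-move normal form: writing the exchange as the standard local replacement at the last strand, $b=\ap\,\sg_{n-1}\,\bt\,\sg_{n-1}^{-1}$ with $\ap,\bt$ in the parabolic subgroup $B_{n-1}=\ang{\sg_1,\dots,\sg_{n-2}}$. Tracking the geometry of the move — the strand that loops around the axis may be wound one extra time at each step — I expect the $m$-th iterate to take, up to conjugacy, a balanced form such as $b_m\doteq\ap\,\gm^{m}\,\bt\,\gm^{-m}$ for a fixed braid $\gm$ (a cabled twist carried by the looping strand), with $b_0=b$. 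Two things need checking here: that every $b_m$ closes to $L$, which is exactly the exchange move applied $m$ times; and that the exponent sum of $b_m$ is independent of $m$, which is forced by the $\gm^{m}\!\cdots\gm^{-m}$ balancing and already shows that exponent sum cannot distinguish the family. The hypotheses $\pi(b)(1)\ne 1$ and $\pi(b)(n)\ne n$ enter first at this stage: they guarantee that the strands taking part in the move are genuine through-strands of the closed braid, so the winding is non-trivial and the family does not collapse under the iteration.

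For the separating invariant I would use the characteristic polynomial of the reduced Burau matrix. Let $\psi_t\colon B_n\to GL_{n-1}(\bZ[t^{\pm1}])$ be the reduced Burau representation and set $\chi_m(\lm)=\det(\lm I-\psi_t(b_m))\in\bZ[t^{\pm1}][\lm]$. Conjugate braids have conjugate Burau matrices, so $\chi_m$ depends only on the conjugacy class of $b_m$; and since $\chi_m$ is \emph{not} a closure invariant (only its value at $\lm=1$, the Alexander polynomial, is), it is free to vary along the family even though all the $b_m$ close to the same link. Writing $A=\psi_t(\ap)$, $B=\psi_t(\bt)$, $\Gamma=\psi_t(\gm)$, we have $\psi_t(b_m)=A\,\Gamma^{m}B\,\Gamma^{-m}$. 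The goal is then to show that $m\mapsto\chi_m$ takes infinitely many values: any single conjugacy class of $B_n$ would meet $\{b_m\}$ in only finitely many indices, and infinitely many non-conjugate braids of $L$ would follow at once.

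To extract the required variation I would diagonalise $\Gamma$ over the fraction field for a generic specialisation of $t$, say $\Gamma=\mathrm{diag}(\mu_1,\dots,\mu_{n-1})$ in a suitable basis, and read off the conjugacy-invariant coefficient
\[
\mathrm{tr}\,\psi_t(b_m)=\sum_{i,j}A_{ij}B_{ji}\,(\mu_j/\mu_i)^{m}.
\]
This is a finite exponential sum in $m$; it is eventually periodic precisely when every ratio $\mu_j/\mu_i$ occurring with a non-zero coefficient $A_{ij}B_{ji}$ is a root of unity, and it assumes infinitely many values otherwise. Equivalently, staying over $\bZ[t^{\pm1}]$, I would track the spread of $t$-degrees among the coefficients of $\chi_m$ and show that it grows linearly in $m$, since $\Gamma^{\pm m}$ contributes $t$-degree of order $m$.

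The main obstacle is exactly this non-degeneracy, and it is where I expect to spend most of the effort: one must rule out the conspiracy in which all non-root-of-unity eigenvalue ratios carry vanishing coefficients, equivalently that no cancellation annihilates the top $t$-degree of $\chi_m$. This is the second, decisive use of the permutation hypotheses. I would argue that $\pi(b)(1)\ne 1$ and $\pi(b)(n)\ne n$ force the off-diagonal blocks of $A$ and $B$ coupling the wound strand to the remaining strands to be non-zero, while $\gm$, being a twist genuinely involving more than one strand, has at least one eigenvalue ratio of infinite multiplicative order for all but finitely many specialisations $t$. Choosing such a $t$ and checking that the surviving coefficient $A_{ij}B_{ji}$ is non-zero would finish the proof; should the Burau trace degenerate in small cases, the faithful Lawrence–Krammer–Bigelow representation provides a two-variable conjugacy invariant as a fallback, with the identical degree-growth argument.
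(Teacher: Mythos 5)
Your setup is sound as far as it goes: the iterated move is indeed $b_m=\ap\kp^m\bt\kp^{-m}$ with $\kp=(\sg_1\cdots\sg_{n-2})(\sg_{n-2}\cdots\sg_1)$, exactly as in \eqref{alexm}, and since $\kp$ is a band generator conjugate to $\sg_1^2$, your exponential sum collapses to the clean form $\mathrm{tr}\,\psi_t(b_m)=c_0+c_+t^{2m}+c_-t^{-2m}$ with $c_+=\mathrm{tr}(APBQ)$, $c_-=\mathrm{tr}(AQBP)$ for the spectral projections $P,Q$ of $\Gamma=\psi_t(\kp)$. But the proof has a genuine gap exactly where you locate it, and it is not a technicality that the permutation hypotheses dispose of. Those hypotheses do strictly less than you claim: they rule out the literal collapse of the family (if $\bt\in\br{\sg_2,\dots,\sg_{n-2}}$ then $\bt$ commutes with $\kp$ and $b_m=b$ for all $m$ --- but then $\pi(b)(n)=n$), yet they give no mechanism forcing $c_\pm\ne 0$; Burau is unfaithful for $n\ge 5$, so ``the off-diagonal blocks coupling the wound strand are non-zero'' cannot be read off from $\pi(b)$ alone. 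Likewise your degree-growth claim for $\chi_m$ is empty without a non-cancellation argument: conjugation by $\Gamma^{\pm m}$ cancels top $t$-degrees entry-wise, which is precisely the issue, and the LKB fallback is only named, with no reason its characteristic polynomial should vary when Burau's does not.

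The decisive point is that the degeneracy you must exclude actually occurs under the stated hypotheses. By Morton's theorem \cite{M}, $\det(\lm I-\psi_t(b_m))$ is, up to units, the (two-variable) Alexander polynomial of the axis-addition link $L_{b_m}$; so your invariant $\chi_m$ is an invariant of the very axis link whose Conway coefficients the paper computes, and the two approaches are close cousins. The paper shows (proposition \ref{l12}) that for $n=2n'+1$ with $x_{n'+1}=1$ the coefficient $a_3(L_{b_m})$ is \emph{constant} in $m$, i.e.\ the natural first-order axis-link invariant genuinely fails on a family of braids satisfying your hypotheses; one should therefore expect your coefficients $c_\pm$ (or the corresponding coefficients of $\chi_m$) to be exactly as vulnerable, and you would at minimum owe a computation showing otherwise. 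The paper escapes by leaving invariants of $L_{b_m}$ altogether and exploiting that conjugacy of $b_m$ forces isotopy of $L_{b_m^2}$: proposition \ref{l13} shows $a_3(L_{b_m^2})$ is quadratic in $m$ with non-zero leading term, and the link cases need further devices (sublinks, mirror images, and explicit evaluation of the quadratic and cubic terms via Hoste's formula, theorem \ref{th}). Your proposal has no analogue of this second layer, so the missing idea is concrete: identify the exceptional cases where the first-order invariant is constant, and distinguish them by a conjugacy invariant that is not a function of $L_{b_m}$ --- for instance invariants of $b_m^2$ --- rather than hoping the non-degeneracy holds uniformly, which the paper demonstrates it does not.
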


Among the different braid representations of a link $L$ the one with
the fewest strands is called a \textit{minimal braid}. The number of
strands of a minimal braid is called the \textit{braid index} $b(L)$
of $L$. Combining our result with the work of Birman and Menasco, we
obtain then:

\begin{cor}\label{main}
Let $L$ be a knot, or a link without trivial components, and let
$n\ge b(L)$. Then $L$ has infinitely many conjugacy classes of $n$-braid
representations if and only if it has one admitting an exchange move. 
\end{cor}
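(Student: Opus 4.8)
The statement is an equivalence, and I would establish its two implications by rather different means: the forward direction ``admits an exchange move $\Rightarrow$ infinitely many conjugacy classes'' is essentially a repackaging of Theorem~\ref{main'}, while the converse is where I would invoke the finiteness results of Birman and Menasco \cite{B}. Throughout I would use the hypothesis $n\ge b(L)$ to guarantee that $L$ genuinely has $n$-braid representations (obtained by stabilizing a minimal braid), so that the statement is not vacuous, and the fact that the property of admitting an exchange move is invariant under conjugacy, since it is defined by requiring $b$ to be conjugate to a word of the special form on which the move acts.

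For the forward implication the plan is to reduce to Theorem~\ref{main'} by producing a conjugate representative satisfying $\pi(b)(1)\ne1$ and $\pi(b)(n)\ne n$. Conjugating $b$ by $w$ replaces $\pi(b)$ by $\pi(w)^{-1}\pi(b)\pi(w)$, so by choosing $\pi(w)$ suitably I may realise any permutation of the same cycle type as $\pi(b)$; in particular I can move two non-fixed points to positions $1$ and $n$ as soon as $\pi(b)$ has at least two non-fixed points, that is, as soon as $\pi(b)\ne\mathrm{id}$. If $L$ is a knot then $\pi(b)$ is a single $n$-cycle and hence fixed-point-free (and an exchange move forces $n\ge3$), so the hypothesis holds with no work. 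If $L$ is a link without trivial components and $\pi(b)\ne\mathrm{id}$, the same conjugation arranges the hypothesis, and Theorem~\ref{main'} then supplies the infinitely many classes.

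The remaining, and genuinely delicate, case is $\pi(b)=\mathrm{id}$. Then $\hat b$ has exactly $n$ one-strand components, so the number of components equals $n$; together with the inequality $b(L)\ge(\text{number of components})$ and $n\ge b(L)$ this forces $n=b(L)=(\text{number of components})$, and every $n$-braid representation of $L$ likewise has trivial permutation, so the conjugation trick is powerless. The heart of the argument here is to show that, under the no-trivial-component hypothesis, such a braid admits no exchange move that changes its conjugacy class: the strand the move flips is a single unknotted loop, and I expect the move can be absorbed into an isotopy, making its two sides conjugate. The model computation $\sigma_{2}\sigma_{1}^{2}\sigma_{2}^{-1}\sigma_{1}^{2}\sim\sigma_{2}^{2}\sigma_{1}^{2}\sim\sigma_{2}^{-1}\sigma_{1}^{2}\sigma_{2}\sigma_{1}^{2}$ in $B_{3}$ already exhibits this collapse, and I would isolate the general principle that a one-strand component carried through the $\sigma_{n-1}^{\pm1}$ pair produces nothing new up to conjugacy unless it splits off as a trivial component, which is exactly what the hypothesis forbids. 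This verification, rather than any appeal to Theorem~\ref{main'}, is the step I expect to be the main obstacle.

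For the converse I would argue contrapositively from Birman and Menasco's theorem \cite{B} that, for a fixed strand number, the $n$-braid representations of a link fall into only finitely many classes once one allows conjugacy together with exchange moves. If no $n$-braid representation of $L$ admitted an exchange move, these exchange-equivalence classes would coincide with the conjugacy classes, and their finiteness would contradict the assumption of infinitely many conjugacy classes; hence some representation must admit an exchange move. The hypotheses $n\ge b(L)$ and the absence of trivial components are precisely what render the cited finiteness theorem applicable and exclude the degenerate split configurations, which completes the equivalence.
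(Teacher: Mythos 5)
Your converse direction coincides with the paper's: by Theorem \ref{TMB} the $n$-braid representations fall into finitely many classes under conjugacy combined with exchange moves, and if no representation of $L$ admitted an exchange move these classes would simply be conjugacy classes, contradicting their infinitude. The gap is in the forward direction, where you missed the single observation that constitutes the paper's entire proof: a fixed point of $\pi(b)$ makes the corresponding strand a one-strand component of $\hat b$, and a one-strand component of a closed braid is always an unknot, i.e.\ a trivial component. Hence under the corollary's hypotheses \emph{every} $n$-braid representation $b$ of $L$ automatically satisfies $\pi(b)(1)\ne 1$ and $\pi(b)(n)\ne n$ --- for a knot because $\pi(b)$ is an $n$-cycle, for a link without trivial components because a fixed point would force a trivial component. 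Theorem \ref{main'} therefore applies directly to the given exchange-admitting representation, with no conjugation and no residual case.

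The two detours you substitute for this observation are both genuinely flawed. First, the conjugation trick: in the paper, ``admits an exchange move'' means $b$ literally has the form of \figurename\ \ref{braid}, in which strands $1$ and $n$ play a distinguished geometric role; conjugating by $w$ to move non-fixed points of $\pi(b)$ into positions $1$ and $n$ will in general destroy this form, and Theorem \ref{main'} requires one braid that \emph{simultaneously} has the form and satisfies the permutation condition. You never show the conjugation can preserve the form, and in general it cannot when the fixed point is a strand the move acts on (the paper notes the move is trivial when the leftmost strand of $\alpha$ or the rightmost strand of $\beta$ is isolated), so the permutation hypothesis of Theorem \ref{main'} is not a removable artifact. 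Second, the case $\pi(b)=\mathrm{id}$ that you identify as ``the main obstacle'' is vacuous under the corollary's hypotheses: if $\pi(b)=\mathrm{id}$ then all components of $\hat b$ are one-strand unknots, so $L$ consists entirely of trivial components --- excluded for a link, and forcing $n=1$ for a knot, where no exchange move exists. Moreover, your proposed treatment of it (showing the two sides of the move are conjugate, ``absorbed into an isotopy'') is logically backwards: if that case could arise, such an absorption argument would refute, not establish, the implication ``admits an exchange move $\Rightarrow$ infinitely many classes''; the only correct way to dispose of it is to show it cannot occur, which is exactly what the trivial-component observation does.
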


Some non-conjugate braids close to isotopic links. Birman had observed
\cite{Birman} that stabilizations of different sign are non-conjugate,
because of different exponent sum. However, the exponent sum is too
weak to distinguish infinitely many conjugacy classes of $n$-braid
representations for any $n$ and $L$. It was also known from \cite{B-M}
that only finitely many conjugacy classes occur when $n\le 3$.

In the case $n>b(L)$ of non-minimal braids, Morton \cite{M}
discovered an infinite sequence of conjugacy classes of 4-braids
with closure being the unknot. Further examples were exhibited more
recently by Fukunaga \cite{F1,F2} and the first author \cite{R}.
For every link, there are obvious (stabilized) non-minimal braid
representations admitting an exchange move. Thus corollary \ref{main}
can always be applied (for knots). The first author obtained this
special case of the theorem in her previous paper \cite{R2}. Her
result for knots was later generalized by the second author to links.
This was done for many links first in \cite{St}, using mostly the
first author's own methods, and later rather completely in \cite{St2},
by an entirely different (Lie group theoretic) approach.

In the case $n=b(L)$ of minimal braids, Birman had conjectured that
there would always be a single conjugacy class of minimal braids
representing a link. However, K.\ Murasugi and R.\ S.\ D.\ Thomas
\cite{Murasugi} disproved Birman's conjecture, exhibiting some
counterexamples in $B_4$. (They claim also such examples in higher
$B_n$, but this is not justified from their proof, which uses the
homomorphism $B_4\to B_3$.) Our result can be seen as such a
construction of nearly exhaustive generality. The few remaining braids
are more subtle, and we discuss them briefly at the end of the paper.


\section{Preliminaries}

\subsection{\label{Bc}Braids and closures}

\begin{dfn}
The \em{braid group} $B_n$ on $n$ strands can be defined by
\[
B_n\es=\es\br{\,\sg_1,\dots,\sg_{n-1}\es\Bigg|\,
\begin{array}{ll}
[\sg_i,\sg_j]=1 & |i-j|>1 \\
\sg_{j}\sg_i\sg_{j}= \sg_i\sg_{j}\sg_i & |i-j|=1
\end{array}
\,}\,.
\]
The $\sg_i$ are called \em{Artin standard generators}.
An element $b\in B_n$ is an \em{$n$-braid}.
\end{dfn}

There is a graphical representation of braids, where in $\sg_i$
strands $i$ and $i+1$ cross, and multiplication is given by stacking.
(We number strands from left to right and orient them downward.)
\[
\begin{array}{ccl}
\sg_i & = & \diag{8mm}{6}{1}{
  \picvecline{0 1}{0 0}
  \picvecline{1 1}{1 0}
  \picvecline{5 1}{5 0}
  \picvecline{6 1}{6 0}
  \pictranslate{3 0}{
    \picmultivecline{0.18 1 -1.0 0}{0 1}{1 0}
    \picmultivecline{0.18 1 -1.0 0}{1 1}{0 0}
  }
  \picputtext{2.1 0.5}{$\dots$}
  \picputtext{3 1.3}{\scriptsize $i$}
  \picputtext{4 1.3}{\scriptsize $i+1$}
}\\[9mm]
\sg_i^{-1} & = & \diag{8mm}{6}{1}{
  \picvecline{0 1}{0 0}
  \picvecline{1 1}{1 0}
  \picvecline{5 1}{5 0}
  \picvecline{6 1}{6 0}
  \pictranslate{3 0}{
    \picmultivecline{0.18 1 -1.0 0}{1 1}{0 0}
    \picmultivecline{0.18 1 -1.0 0}{0 1}{1 0}
  }
  \picputtext{2.1 0.5}{$\dots$}
  \picputtext{3 1.3}{\scriptsize $i$}
  \picputtext{4 1.3}{\scriptsize $i+1$}
}
\end{array}\qquad\quad
\ap\cdot \bt\quad=\quad\diag{8mm}{3}{4}{
  \picvecline{0.5 4}{0.5 0}
  \picvecline{2.5 4}{2.5 0}
  \picputtext{1.5 3.8}{.\es.\es.}
  \picputtext{1.5 0.2}{.\es.\es.}
  \picfilledbox{1.5 1.2}{3 1.2}{$\bt$}
  \picfilledbox{1.5 2.8}{3 1.2}{$\ap$}
}
\]

The \em{closure} $\hat b$ of a braid $b$ is a knot or
link (with orientation) in $S^3$:
\[
\diag{1cm}{1}{2}{
 \picvecwidth{0.04}
 \pictranslate{-0.5 0}{
  \picvecline{0.75 2}{0.75 0}
  \picvecline{1.0 2}{1.0 0}
  \picvecline{1.25 2}{1.25 0}
  \picfilledbox{1 1}{1 1}{$ b$}
 }%
}\qquad\lra\qquad
\diag{1cm}{2.5}{3}{
\pictranslate{-0.6 -0.5}{
  \picvecwidth{0.04}
  \picovalbox{2 2}{1.2 2}{0.44}{}
  \picovalbox{2 2}{1.7 2.5}{0.6}{}
  \picovalbox{2 2}{2.2 3}{0.8}{}
  \picvecline{2.6 2}{2.6 2.1}
  \picvecline{2.85 2}{2.85 2.1}
  \picvecline{3.1 2}{3.1 2.1}
  \picfilledbox{1.15 2}{1 1}{$ b$}
}}\es=\es\hat b\,.
\]

There is a permutation homomorphism of $B_n$,
\[
\pi\,:\,B_n\to S_n\,,\quad\mbox{given by}\quad
\,\pi(\sg_i)\,=\,(i,\ i+1)\,.
\]
(The permutation on the right is a transposition.) We call 
$\pi(b)$ the \textit{braid permutation} of $b$. We call $b$ a
\em{pure braid} if $\pi(b)=Id$.

Let $b$ be an $n$-braid with numbered endpoints
as in \figurename~\ref{permutation}.
Suppose that $b$ has its strings connected as follows:
$1$ to $i_1$, $2$ to $i_2$, \ldots, $n$ to $i_n$, i.e.
$\pi(b)(k)=i_k$. Then we write
\[
\pi(b)\,=\,\left(
\begin{array}{*4c}
1 & 2  &\ldots & n \\ 
i_1 & i_2 &\ldots  & i_n
\end{array}
\right).
\]

For example the braid $b_1$ in \figurename \ \ref{permutation}
has the permutation
\begin{eqnarray*}
\left(
\begin{array}{*4c}
1 & 2 & 3 & 4\\ 
2 & 4 & 1 & 3
\end{array}
\right)
=
\left(
\begin{array}{*4c}
 1& 2& 4 & 3  
\end{array}
\right),
\end{eqnarray*}
where $(1\hspace{8pt} 2 \hspace{8pt} 4 \hspace{8pt} 3)$ means a
cyclic permutation. The braid $b_2$ in the figure has the permutation 
$(1 \hspace{8pt} 3 \hspace{8pt} 5)(2 \hspace{8pt} 4)$.
We remark that when the closure of $b$ is a $k$-component link,
the braid permutation of $b$ is a product of $k$ disjoint cycles.
The length of each cycle is equal to the number of strings of $b$
which make up a component of $\hat b$.

When we choose a (non-empty) subset $C$ of $\{1,\dots,n\}$ whose
elements form a subset of the cycles of $\pi(b)$, we can define a
\em{subbraid} $b'$ of $b$ by choosing only strings numbered
in $C$. For subbraids $b'$ and $b''$ of $b$ one can define the
\em{(strand) linking number} $lk(b',b'')$ by the linking number
$lk(\hat b',\hat b'')$ between sublinks of $\hat b$. For example,
in $b_2$ of \figurename \ \reference{permutation}, we have
$lk(b_2',b_2'')=0$.


\begin{figure}[htbp]
\begin{center}
\includegraphics*[scale=1.9]{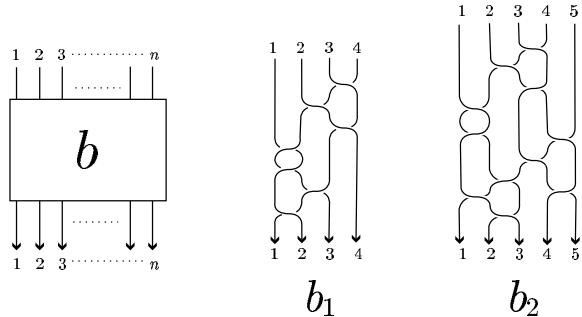}
\end{center}
\caption{An $n$-braid}
\label{permutation}
\end{figure}


\subsection{Exchange moves}

Let   
\[
\Dl_n^{2}\,=\,(\sg_1\cdot\ldots\cdot\sg_{n-1})^n
\]
be the (right-handed) full twist on $n$ strands.
The \em{center} of $B_n$ (elements that commute with all $B_n$)
is infinite cyclic and generated by $\Dl_n^{2}$.
Let similarly
\[
\Dl_{[i,j]}^{2}\,=\,(\sg_i\cdot\ldots\cdot\sg_{j-1})^{j-i+1}
\]
be the restricted full twist on strands $i$ to $j$.

We say that $b\in B_n$ \em{admits an exchange move}, if $b$ is as
illustrated in \figurename \ \ref{braid}, where $\alpha, \beta \in
B_{n-1}$. It makes sense to assume $n>3$.

\begin{figure}[htbp]
\begin{center}
  \includegraphics*[scale=1]{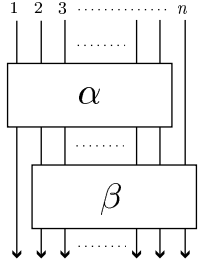}
\end{center}
\caption{The $n$-braid $b$.}
\label{braid}
\end{figure}

An \em{exchange move} \cite{B} is the transformation between the
braids $b$ and $b_m$ shown in \figurename \ \ref{bwitht}. Here $m$
is some non-zero integer, and the boxes labeled $\pm m$ represent
the full twists $\Dl_{[2,n-1]}^{\pm 2m}$ respectively,
acting on the middle $n-2$ strands. (Thus a positive
number of full twists are understood to be right full twists, and
$-m$ full twists mean $m$ full left-handed twists.)

\begin{figure}[htbp]
\begin{center}
  \includegraphics*[scale=1]{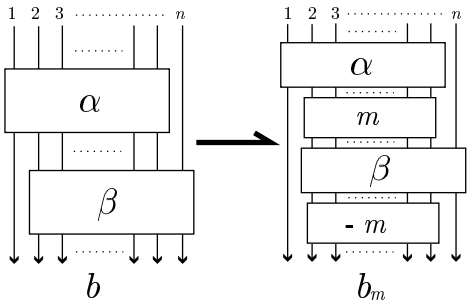}
\end{center}
\caption{The braid $b_m$}
\label{bwitht}
\end{figure}

There is another, more common, way to describe the exchange move,
namely by
\begin{equation}\label{alexm}
\ap\bt\,\llra\,\ap\kp^m\bt\kp^{-m}\,,\qquad\mbox{where}
\quad\kp=(\sg_1\cdot\ldots\cdot\sg_{n-2})
(\sg_{n-2}\cdot\ldots\cdot\sg_1)\,.
\end{equation}
This description is equivalent to the previous one, because
$\kp\cdot\Dl_{[2,n-1]}^2=\Dl_{[1,n-1]}^2$, and this element commutes
with $\ap$.

The exchange move underlies the switch between conjugacy classes with
the same closure link, in a universal way.
       
\begin{thm}(Birman-Menasco \cite{B})\label{TMB}
The $n$-braid representations of a given link decompose into a finite
number of classes under the combination of exchange moves and conjugacy.
\end{thm}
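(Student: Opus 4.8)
The plan is to use the geometry of closed braids in the complement of a fixed unknotted braid axis $A$. The pages $H_\theta$ of the open book decomposition of $S^3$ bounded by $A$ foliate this complement, and an $n$-braid representative of $L$ is precisely a copy of $L$ transverse to every page and meeting each in $n$ points. Given two $n$-braid representatives $\hat b_0$ and $\hat b_1$ of $L$, Markov's theorem connects them through closed braids by conjugacies and (de)stabilizations. Following Birman and Menasco, the strategy is to analyze such a transition by the braid foliation technique: one takes a surface carrying the isotopy and foliates it by its intersections with the pages $H_\theta$, obtaining a singular \emph{braid foliation} whose combinatorics governs the transition.

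First I would put the foliation in general position, so that it carries only finitely many singular leaves --- elliptic points, where a page is tangent to the surface along $A$, and hyperbolic (saddle) points --- and encode their incidences in a finite labelled graph. The analytic heart is to simplify this graph by the standard changes of foliation, removing trivial loops and low-valence vertices, and to read off from each reduction an elementary modification of the closed braid. One checks that rotating a representative about $A$ is a conjugacy, that a reduction lowering the number of strands is a destabilization (disallowed once $n$ is fixed), and that every reduction preserving the strand number is a conjugacy or an exchange move, the move of Figure~\ref{bwitht}. The point to establish is that exchange moves and conjugacies already suffice to push any representative down to one whose braid foliation admits no further reduction at index $n$.

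Finiteness then follows from a complexity bound. In a fully reduced braid foliation the number of elliptic and hyperbolic singularities is controlled by the Euler characteristic of the spanning surface, hence ultimately by $n$ and the genus of $L$, so only finitely many reduced representatives can occur; since every representative is exchange-and-conjugacy equivalent to a reduced one, there are finitely many classes. As a soft supporting estimate in the same direction, the exponent sum $e(b)$ of any $n$-braid representative of $L$ lies in a finite interval, by the Bennequin inequality applied to $L$ and to its mirror, and $e$ is preserved by exchange moves and conjugacy, so the representatives already fall into finitely many exponent-sum strata. The main obstacle is exactly the foliation reduction: one must prove that, having excluded destabilization at fixed index, every complexity-decreasing step is realized by an exchange move or a conjugacy, so that the irreducible representatives are genuinely bounded in number. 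This is the technical core of the Birman--Menasco theory of \cite{B}, and is what makes the statement deep.
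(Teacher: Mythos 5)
The paper does not contain a proof of Theorem \ref{TMB} at all: it is imported verbatim from Birman--Menasco \cite{B}, so the only benchmark is the proof in that paper, whose braid-foliation strategy your sketch reproduces correctly in outline (general position, elliptic and hyperbolic singularities, changes of foliation, reading each complexity reduction as a conjugacy, a destabilization, or an exchange move). But an outline is what it remains: you yourself flag the decisive step --- that every complexity-decreasing reduction which preserves the strand number is realized by conjugacy or an exchange move --- as ``the technical core of the Birman--Menasco theory'' and leave it unproven. A proof attempt that defers exactly the step carrying all the content of the theorem has to be judged as having a genuine gap, not as a proof with the same approach.

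Beyond that concession, two of your concrete claims would fail as stated. First, the Euler characteristic does not bound the number of singularities: for a braid foliation of a surface $S$ one has only $e-h=\chi(S)$, which controls the \emph{difference} of the elliptic and hyperbolic counts, not each count separately; the actual bound in \cite{B} comes from $e_{+}-e_{-}=lk(A,\hat b)=n$ combined with the elimination of negative elliptic points and low-valence vertices --- and that elimination is precisely the deferred reduction, so your finiteness paragraph is circular. Moreover, even granting a reduced foliation with boundedly many singularities, ``only finitely many reduced representatives can occur'' needs the nontrivial lemma that the combinatorial data of the foliation determines the closed braid up to conjugacy; finitely many tilings do not by themselves give finitely many braids. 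Second, declaring destabilization ``disallowed once $n$ is fixed'' dodges rather than handles the stabilized representatives with $n>b(L)$, which the theorem explicitly covers: when the reduced foliation forces a destabilization one must destabilize, invoke finiteness at index $n-1$ inductively, and restabilize, checking that this returns only finitely many classes at index $n$. Your Bennequin/exponent-sum remark is correct (exchange moves insert $\kp^{m}\kp^{-m}$ and preserve exponent sum, and the self-linking bound applied to $L$ and its mirror confines $e(b)$ to a finite interval), but it yields only a finite stratification with no finiteness within strata, so it cannot substitute for the missing core.
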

	      
\subsection{Axis link and Conway polynomial}


\begin{dfn}
The \textit{axis (addition) link} of a braid $b$, denoted by $L_{b}$,
is the oriented link 
consisting of the closure of $b$ and an unknotted curve $k$, 
the axis of the closed braid, as in \figurename \ \ref{fig3}.
\end{dfn}

\begin{figure}[htbp]
\begin{center}
  \includegraphics*[scale=0.8]{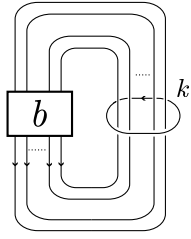}
\end{center}
\caption{The axis-addition link of $b$}
\label{fig3}
\end{figure}

We remark that the axis-addition links of conjugate braids 
are ambient isotopic. Thus for a proof of
non-conjugacy we will study invariants of the axis link. As such an
invariant we will employ the \em{Conway polynomial} $\nb$. It takes
values in $\bZ[z]$ and is given by the value $1$ on the unknot and
the relation
\begin{equation}\label{nbs}
\nb(L_+)-\nb(L_-)=z\nb(L_0)\,.
\end{equation}
This relation involves three links with diagrams
\begin{equation}\label{Lpm0}
\begin{array}{*2{c@{\qquad}}c}
\diag{9mm}{1}{1}{
\picvecwidth{0.09}
\picmultivecline{-8 1 -1.0 0}{1 0}{0 1}
\picmultivecline{-8 1 -1.0 0}{0 0}{1 1}
}
&
\diag{9mm}{1}{1}{
\picvecwidth{0.09}
\picmultivecline{-8 1 -1 0}{0 0}{1 1}
\picmultivecline{-8 1 -1 0}{1 0}{0 1}
}
&
\diag{9mm}{1}{1}{
\picvecwidth{0.09}
\piccirclearc{1.35 0.5}{0.7}{-230 -130}\picvecrlineto{0.01 0.011}
\piccirclearc{-0.35 0.5}{0.7}{310 50}\picvecrlineto{-.01 0.011}
}
\\[2mm]
L_+ & L_- & L_0
\end{array}
\end{equation}
differing just at one crossing. They are called a \em{skein triple}.



We write $[P]_m$ for the coefficient of $z^m$ in $P\in \bZ[z]$,
and more shortly $a_m=[\nb]_m$ for the coefficient of $z^m$ in
the Conway polynomial.

It is well-known that for an $n$-component link $L$, all coefficients
of $\nb$ in $z$-degree $m$ vanish when $m<n-1$ or $m+n$ is even.

We denote the \em{linking number} of two components of $L$ by
$lk(\cdot,\cdot)$. Now we recall a formula, given by Hoste \cite{H},
which expresses the lowest non-trivial coefficient $a_{n-1}$ of
$\nb(L)$ in terms of component linking numbers.

\begin{thm}\label{th} (see \cite{H})
Let $L=L_1\cup\cdots\cup L_p$ be a $p$-component link of components
$L_1,\dots,L_p$. Let $l_{k,m}=lk(L_k,L_m)$. Then the coefficient
$a_{p-1}$ of the Conway polynomial in degree $p-1$ is
\begin{equation}\label{hf}
a_{p-1}(L)\,=\,\sum_{T}\,\prod_{(k,m)\in T}\,l_{k,m}\,.
\end{equation}
Herein the sum ranges over spanning trees $T$ of the complete
graph $G$ on the vertex set $\{1,\dots,p\}$, and $(k,m)$
denotes the edge in $G$ connecting the $k$-th and $m$-th vertex.
\end{thm}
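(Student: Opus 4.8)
The plan is to prove \eqref{hf} by induction on the number of components $p$, turning the skein relation \eqref{nbs} into a recursion on $p$ and matching it against the classical deletion--contraction recursion for the weighted spanning-tree polynomial. Write $S(L)=\sum_{T}\prod_{(k,m)\in T}l_{k,m}$ for the right-hand side of \eqref{hf}, so that the goal is $a_{p-1}(L)=S(L)$. The base case $p=1$ is immediate: a knot has $a_0=1$, while the complete graph on one vertex has a single, edgeless spanning tree whose empty product is $1$.

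First I would record how $a_{p-1}$ responds to a crossing change, using the stated vanishing of Conway coefficients below degree $q-1$ for a $q$-component link. At a self-crossing of a single component, $L_+$ and $L_-$ have the same $p$ components and identical linking numbers, while the oriented smoothing $L_0$ has $p+1$ components; hence $z\nb(L_0)$ has no term in degree $p-1$, and \eqref{nbs} gives $a_{p-1}(L_+)=a_{p-1}(L_-)$, i.e.\ $a_{p-1}$ is unchanged by self-crossing changes. At a crossing between two distinct components $L_k$ and $L_m$, the smoothing $L_0$ fuses them into one component and so has $p-1$ components; extracting the coefficient of $z^{p-1}$ from \eqref{nbs} yields
\[
a_{p-1}(L_+)-a_{p-1}(L_-)\,=\,a_{p-2}(L_0),
\]
since $[z^{p-1}]\,z\nb(L_0)=[z^{p-2}]\,\nb(L_0)=a_{p-2}(L_0)$ is exactly the lowest coefficient of the $(p-1)$-component link $L_0$.

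Next I would show that $S$ obeys the matching recursion. As $S$ depends only on linking numbers, it too is unchanged by self-crossing changes. Since every spanning tree uses each edge at most once, $S$ is multilinear in the variables $l_{i,j}$, so at a crossing between $L_k$ and $L_m$, where $l_{k,m}$ increases by $1$ and all other linking numbers are fixed,
\[
S(L_+)-S(L_-)\,=\,\frac{\partial S}{\partial l_{k,m}}\,=\,\sum_{T\ni(k,m)}\ \prod_{e\in T,\,e\ne(k,m)}l_e.
\]
By deletion--contraction, the spanning trees of the complete graph on $\{1,\dots,p\}$ containing the edge $(k,m)$ correspond to spanning trees of the graph obtained by contracting that edge, i.e.\ by merging $k$ and $m$ into one vertex $v$. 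On the topological side this contraction is precisely the fusion of $L_k$ and $L_m$ by the oriented smoothing, under which linking numbers add: $lk(L_v,L_i)=l_{k,i}+l_{m,i}$. Distributing these sums over the contracted trees identifies the displayed derivative with $S(L_0)$, which by the inductive hypothesis equals $a_{p-2}(L_0)$. Thus $S$ and $a_{p-1}$ change by the same amount at every crossing change.

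It follows that $g:=a_{p-1}-S$ is invariant under all crossing changes. Finally I would reduce an arbitrary $p$-component diagram to the $p$-component unlink by switching crossings to make it descending with respect to a fixed ordering and basepoints of the components; since $g$ is crossing-change invariant, $g(L)=g(\text{unlink})$, and for the unlink with $p\ge 2$ both $\nb$ and all linking numbers vanish, so $g=0$. This proves $a_{p-1}(L)=S(L)$. The main obstacle is the combinatorial step: verifying that $\partial S/\partial l_{k,m}$ is genuinely the contracted spanning-tree sum, and that under the oriented smoothing the merged linking numbers add rather than cancel, so that the algebraic deletion--contraction and the topological fusion recursions line up with matching signs.
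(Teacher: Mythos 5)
Your proof is correct, but note that the paper does not prove this theorem at all: it is quoted as known, with a citation to Hoste \cite{H}, so there is no in-paper argument to compare against. On its own merits your skein-theoretic induction is complete and sound, and it is essentially the standard route (close in spirit to Hoste's original proof). The three pillars all check out: (i) the coefficient extraction from the skein relation is right in both cases --- at a self-crossing $L_0$ gains a component, so $z\nabla(L_0)$ starts in degree at least $p+1$ and $a_{p-1}$ is unchanged, while at a mixed crossing $L_0$ has $p-1$ components and $a_{p-1}(L_+)-a_{p-1}(L_-)=a_{p-2}(L_0)$ is exactly the bottom coefficient handled by the inductive hypothesis; (ii) the combinatorial step works because $S$ is multilinear (no spanning tree repeats an edge), trees through $(k,m)$ biject with spanning trees of the contraction, and the two parallel edges $v$--$i$ of the contracted multigraph carry weights $l_{k,i}$ and $l_{m,i}$ whose sum matches the homological additivity $lk(L_v,L_i)=l_{k,i}+l_{m,i}$ under the oriented smoothing, with no sign issue since orientations are preserved; (iii) the reduction of $g=a_{p-1}-S$ to the $p$-component unlink via a descending diagram is legitimate because crossing changes preserve the component count, $\nabla$ vanishes on split links, and every spanning tree has at least one edge when $p\ge 2$, so both terms vanish there. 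One could alternatively cite Hoste or prove the formula via Seifert matrices or the Torres conditions, but your deletion--contraction framing is a clean, self-contained organization of the inductive bookkeeping.
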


\proof[Proof of corollary \ref{main}]
The `only if' part in corollary \ref{main} immediately follows
from Theorem \ref{TMB}. The `if' part is a consequence of theorem
\ref{main'}, because under the assumed conditions of $L$, whatever
braid representation $b$ of $L$ satisfies $\pi(b)(k)\ne k$ for
$k=1,n$. \qed

\proof[Proof of theorem \reference{main'}]
We start now the proof of theorem \ref{main'}, which will extend
over several sections until the end of the paper.

In order to exhibit braids $b_m$ in \figurename \ \reference{bwitht}
as non-conjugate, we will follow the approach in \cite{R2},
and evaluate the second coefficient of $\nb$ on the axis addition
link of $b$. We will show that an exchange move alters this
coefficient except in a situation described in the following
proposition.

\begin{pro}\label{l12}
Let $n \ge 4$, and $K$ be a knot represented as the closure of an $n$-%
braid $b$ admitting an exchange move as in \figurename \ \ref{bwitht}.
Write the braid permutation $\pi(b)=(x_1, x_2, \ldots, x_n)$, where we
fix the cyclic ambiguity of $x_i$ by letting the cycle end on $x_n=n$.
Then, \em{unless} $n = 2n'+ 1$ (for $n' \in \bN$) and $x_{n'+1}=1$,
all links $L_{b_m}$, for $b_m$ as in \figurename \ \ref{bwitht}, are
distinguished by $a_3$.
\end{pro}

In the next section we will be concerned with proving this proposition.
The remaining, and more complicated, cases will be settled in \S\ref{S5}
by looking at the axis addition link $L_{b^2}$ of the square of $b$.

\section{Proof of Proposition \ref{l12}\label{S4}}


First, we give a lemma needed later.
A \textit{delta move} is a local move defined in \cite {MN}, and
this move is equivalent to the move in \figurename \ \ref{delta}.
We consider the delta move on the left-hand side in \figurename
\ \ref{local}, where the dotted arcs show how the strands connect.

\begin{figure}[htbp]
\begin{center}
  \includegraphics*[scale=1]{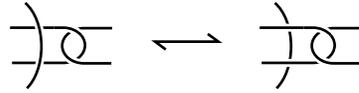}
\end{center}
\caption{A delta move}
\label{delta}
\end{figure}

In a way similar to the proof of Lemma 2.2 in \cite{R2} we can
prove the following lemma using theorem \ref{th}. (We remind that
the linking number and $i$-th coefficient of the Conway polynomial
are written $lk(\cdot,\cdot)$ and $a_i(\cdot)$, respectively.)

\begin{lem} \label{a_3}
Let $L$, $L'$ and $l=k_1 \cup k_2 \cup k_3$ be oriented links related 
by the local moves as in \figurename \ \ref {local}.
Then $a_3(L) - a_3(L') = lk(k_2, k_3) - lk(k_3, k_1)$.
\end{lem}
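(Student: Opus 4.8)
The plan is to compute $\nabla(L)-\nabla(L')$ directly by resolving, via the Conway skein relation~(\ref{nbs}), the crossings at which the two sides of the delta move in \figurename~\ref{local} differ. Since a delta move is supported in a ball meeting the diagram in the three strands $k_1,k_2,k_3$, I would first exploit this locality to reduce the entire computation to these strands: every oriented smoothing of a changed crossing either preserves the partition of the strands into the components of $L$ or fuses two of the $k_i$, while any contribution coming from the diagram outside the ball is common to $L$ and $L'$ and cancels in the difference.

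The key structural point I expect to extract is that this difference telescopes into a $z^2$-multiple of a signed sum of Conway polynomials of two-component links obtained by smoothing one of the three strands away. Concretely, I anticipate an identity of the shape
\[
\nabla(L)-\nabla(L')\,=\,z^{2}\big(\nabla(L_{23})-\nabla(L_{31})\big),
\]
where $L_{23}$ and $L_{31}$ are the two-component links in which the pairs $(k_2,k_3)$ and $(k_3,k_1)$ respectively survive as the two components after the remaining strand is absorbed by an oriented smoothing. This shape is consistent with the fact that a delta move preserves all pairwise linking numbers: a two-component Conway polynomial is odd in $z$, so the bracket is odd, the product is again odd, and its lowest term sits in degree $z^{3}$. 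In particular the $z^{1}$-coefficients of $\nabla(L)$ and $\nabla(L')$ agree, and the first genuine difference appears exactly at $a_3$.

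Extracting the coefficient of $z^{3}$ then gives $a_3(L)-a_3(L')=a_1(L_{23})-a_1(L_{31})$, the factor $z^{2}$ shifting the lowest surviving contribution from $a_1$ to $a_3$. Since $L_{23}$ and $L_{31}$ are two-component links, Theorem~\ref{th} applied with $p=2$ (where the single spanning tree of $K_2$ is the lone edge) identifies $a_1(L_{23})=lk(k_2,k_3)$ and $a_1(L_{31})=lk(k_3,k_1)$, these strand linking numbers being unaffected by the delta move. This yields the claimed equality $a_3(L)-a_3(L')=lk(k_2,k_3)-lk(k_3,k_1)$.

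The main obstacle will be the oriented-smoothing bookkeeping: I must verify that the changed crossings resolve so as to produce exactly the factor $z^{2}$ and exactly the two sublinks $L_{23}$ and $L_{31}$ with opposite signs, and in particular that the pair $(k_1,k_2)$ never arises as a surviving two-component smoothing, so that $lk(k_1,k_2)$ drops out entirely. Pinning down these signs and the distinguished role of the strand $k_3$ — forced by the orientations and the geometry of the move in \figurename~\ref{delta}, under which $k_1$ and $k_2$ enter antisymmetrically — is the delicate part, since a single sign error or a misidentification of which strand is absorbed would corrupt the final formula. This is precisely the computation carried out for Lemma~2.2 in \cite{R2}, which I would follow closely.
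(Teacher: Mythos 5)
Your overall strategy---resolving the delta-move site via the skein relation \eqref{nbs} until the lowest surviving coefficients can be read off from linking numbers by Theorem~\ref{th}---is exactly the route the paper intends, which itself gives no details and only points to the proof of Lemma~2.2 in \cite{R2}. But your anticipated key identity contains a genuine error at precisely the step you flagged as delicate. An oriented smoothing cannot make the remaining strand ``disappear'': the arcs of $k_1$ must be joined onto one of the surviving components. So there is no resolvent two-component link whose components are literally $k_2$ and $k_3$; the two-component links actually produced by the telescoping are fusions, of the form $(k_1\# k_3)\cup k_2$ and $(k_2\# k_3)\cup k_1$ (up to the labeling fixed by \figurename~\ref{delta}), whose linking numbers are $lk(k_1,k_2)+lk(k_2,k_3)$ and $lk(k_1,k_2)+lk(k_3,k_1)$ respectively---each a \emph{sum of two} pairwise linking numbers of $l$, not a single one. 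Hence both your claims, that $a_1(L_{23})=lk(k_2,k_3)$ and that the pair $(k_1,k_2)$ ``never arises'', are false; what rescues the final formula is that $lk(k_1,k_2)$ occurs in both terms with the same sign and cancels in the difference: $\bigl(lk(k_1,k_2)+lk(k_2,k_3)\bigr)-\bigl(lk(k_1,k_2)+lk(k_3,k_1)\bigr)=lk(k_2,k_3)-lk(k_3,k_1)$. As written, then, your justification of the last step is invalid even though the end formula is correct, and by your own admission a misidentification here ``would corrupt the final formula.''

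For comparison, the computation the paper has in mind (following \cite{R2}) resolves instead to \emph{three}-component links: the two skein applications at the delta site yield links differing from $l$ by shifting a single pairwise linking number by one, and Hoste's formula with $p=3$, namely $a_2=\lambda_{12}\lambda_{13}+\lambda_{12}\lambda_{23}+\lambda_{13}\lambda_{23}$ (the spanning trees of the triangle), is applied to each; the quadratic terms cancel in the difference, leaving $\lambda_{23}-\lambda_{13}$. This is why the paper invokes Theorem~\ref{th} in its general form rather than merely the classical fact $a_1=lk$ for two-component links. Your $p=2$ variant is viable and even slightly more elementary, but only after the resolvent links are identified correctly and the cancellation of $lk(k_1,k_2)$---rather than its absence---is made the operative mechanism.
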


\begin{figure}[htbp]
\begin{center}
  \includegraphics*[scale=1.2]{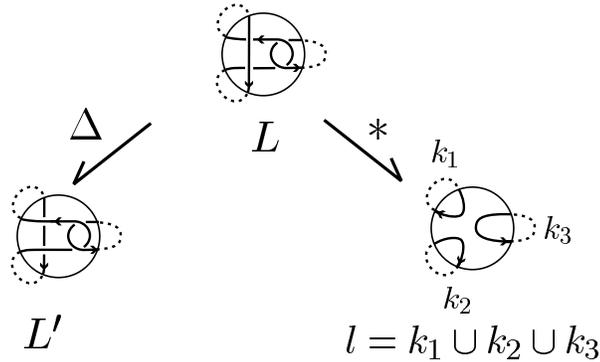}
\end{center}
\caption{Three links related by local moves}
\label{local}
\end{figure}


Now we are ready to start the proof of the first
partial case of Theorem \ref{main'}.

\begin{proof}[Proof of Proposition \ref{l12}]
We set $b_0=b$.
From a braid $b$, we construct an infinite sequence of braids 
$\{b_m, m \in \bZ\}$ 
as in \figurename \ \ref{bwitht} where $m$ and $-m$ represent 
$m$ and $-m$ full twists respectively.
The closures of $b_m$ and $b_{m-1}$ ($m \in \bN$) 
are related by ambient isotopy as in 
\figurename \ \ref{bm} where $A$ denotes the 
braid axis.
This means that all braids in the sequence close to $K$.

\begin{figure}[htbp]
\begin{center}
  \includegraphics*[scale=1.1]{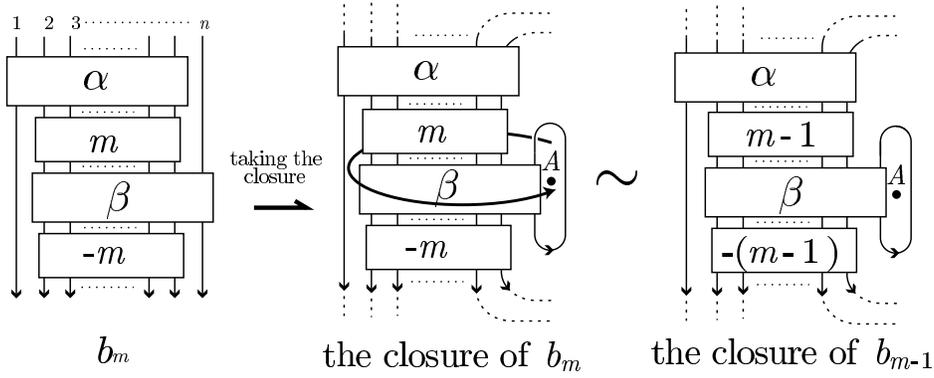}
\end{center}
\caption{Braids with the same closure}
\label{bm}
\end{figure}
Since a full twist of $n$ strings can be deformed as in 
\figurename \ \ref{atwist} up to ambient isotopy,
the axis addition link $L_{b_m}$ of $b_m$, which is the
leftmost diagram in \figurename \ \ref{mtwists},
can be deformed into the rightmost link  
in the figure, still denoted by $L_{b_m}$. 
Here $k$ is the component corresponding 
to the braid axis and 
the boxes $m$ and $-m$ represent $m$-full twists and $-m$-full
 twists respectively.

\begin{figure}[htbp]
\begin{center}
  \includegraphics*[scale=0.8]{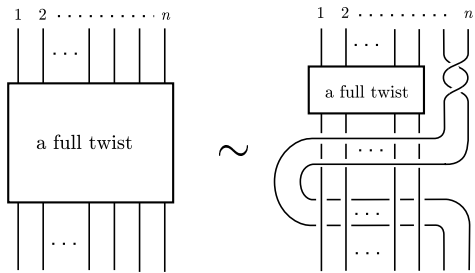}
\end{center}
\caption{A full twist of $n$-strings}
\label{atwist}
\end{figure}

\begin{figure}[htbp]
\begin{center}
  \includegraphics*[scale=0.9]{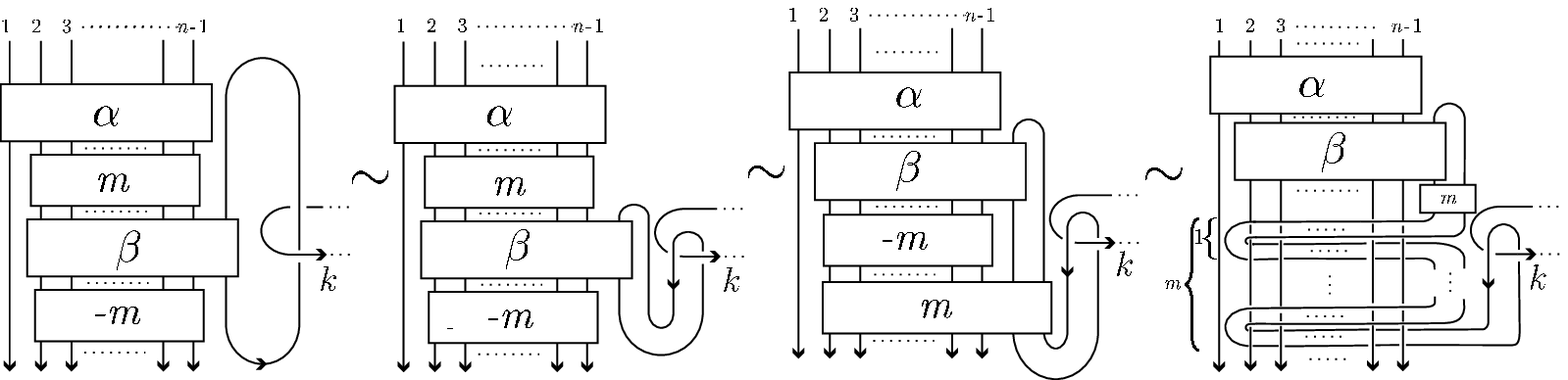}
\end{center}
\caption{$L_{b_m}$}
\label{mtwists}
\end{figure}

Then there are  sequences of links $L_{b_m}= L^0, L^1, L^2,
\ldots, L^{n-1}=L_{b_{m-1}}$ and $l^0,l^1,l^2, \ldots,l^{n-1}$ such
that $L^{i+1}$ and $l^i$ are obtained from $L^i$ by  
the delta move $\Delta_i$ and the move $*_i$  
illustrated in \figurename \ \ref{delta0} ($i=0$) and \ref{deltak}
($i=1, \ldots, n-2$).
By Lemma \ref{a_3},  
the change in $a_3$ resulting from $\Delta_0$ can be obtained 
as follows:
\begin{align*}
a_3(L^1)-a_3(L^0)=lk(l_1^0 \cup l_3^0) - lk(l_2^0 \cup l_3^0) = n-1,
\end{align*}
where $l^0=l_1^0 \cup l_2^0 \cup l_3^0$ is the $3$-component 
link illustrated in \figurename~\ref{delta0}.

\begin{figure}[htbp]
\begin{center}
  \includegraphics*[scale=1.2]{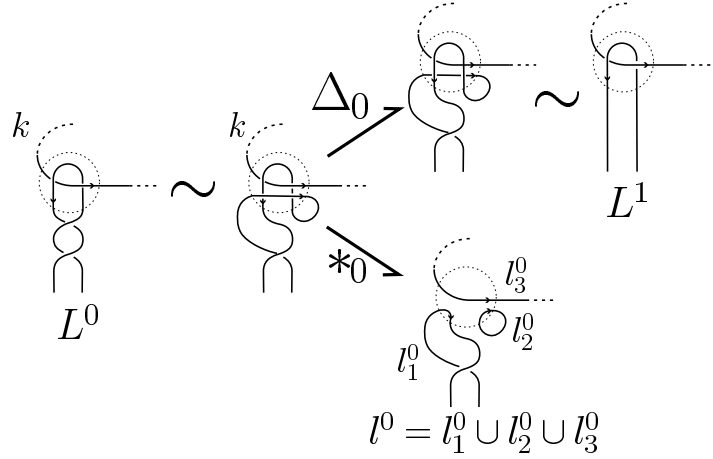}
\end{center}
\caption{The moves $\Delta_0$ and $*_0$}
\label{delta0}
\end{figure}

\begin{figure}[htbp]
\begin{center}
  \includegraphics*[scale=1.3]{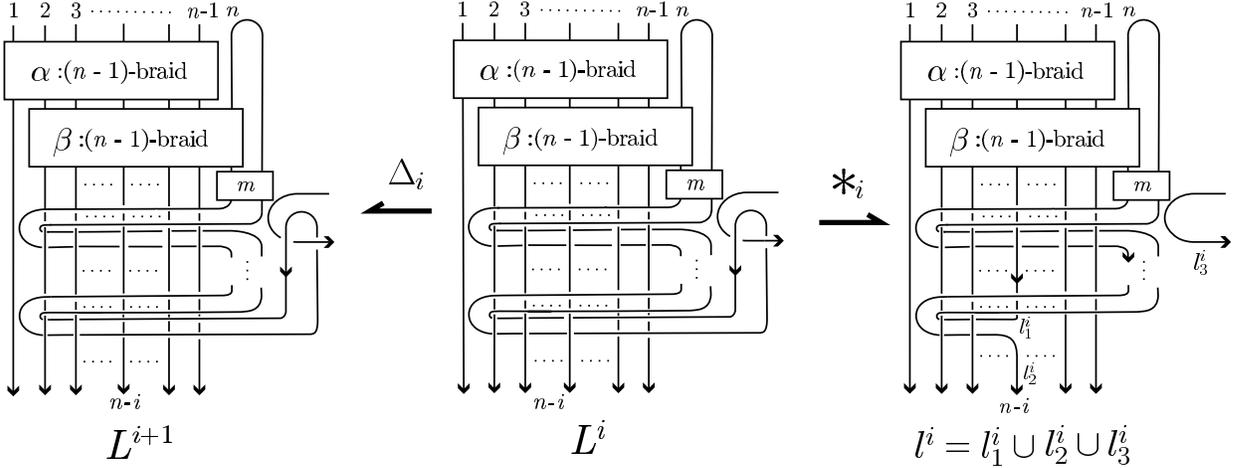}
\end{center}
\caption{The local moves on $L^{i}$}
\label{deltak}
\end{figure}

Next we consider the change in $a_3$ resulting from  $\Delta_i$
illustrated in the \figurename~\ref{deltak} ($i=1,2, \ldots, n-2$).
Let $S_{L^i}$(resp. $S_{l^i}$) be a part of $L^i$ (resp. $l^i$) as
in the left (resp. right) diagram of \figurename~\ref{deltak2}.
Namely $S_{L^i}$ and $S_{l^i}$ are the unions of $n$ strings and an 
unknotted component. Some of these $(n-1)$ strings of $S_{l^i}$
belong to $l_1^i$ and the other belong to $l_2^i$. The numbers of
strings determine $lk(l_1^i \cup l_3^i)$ and $lk(l_2^i \cup l_3^i)$.

By considering how $S_{l^i}-l_3^i$ has its strings connected,
permutations of the $n$ down going strings can be assigned
to $S_{L^i}$ and $S_{l^i}$, similarly to a braid permutation.
We call these the permutations of $S_{L^i}$ and $S_{l^i}$.
Note that the permutation assigned to $S_{L^i}$ is the same 
as the braid permutation $\pi(b)$ of $b$. Since $l^i-l_3^i$
is a $2$ component link, the permutation of $S_{l^i}$ consists
of $2$ cycles.
To determine the length of these cycles, we observe that
the move $*_i$ corresponds to taking the product of a
transposition $(n-i, n)$ with the permutation of $S_{L^i}$.

\begin{figure}[htbp]
\begin{center}
  \includegraphics*[scale=1.2]{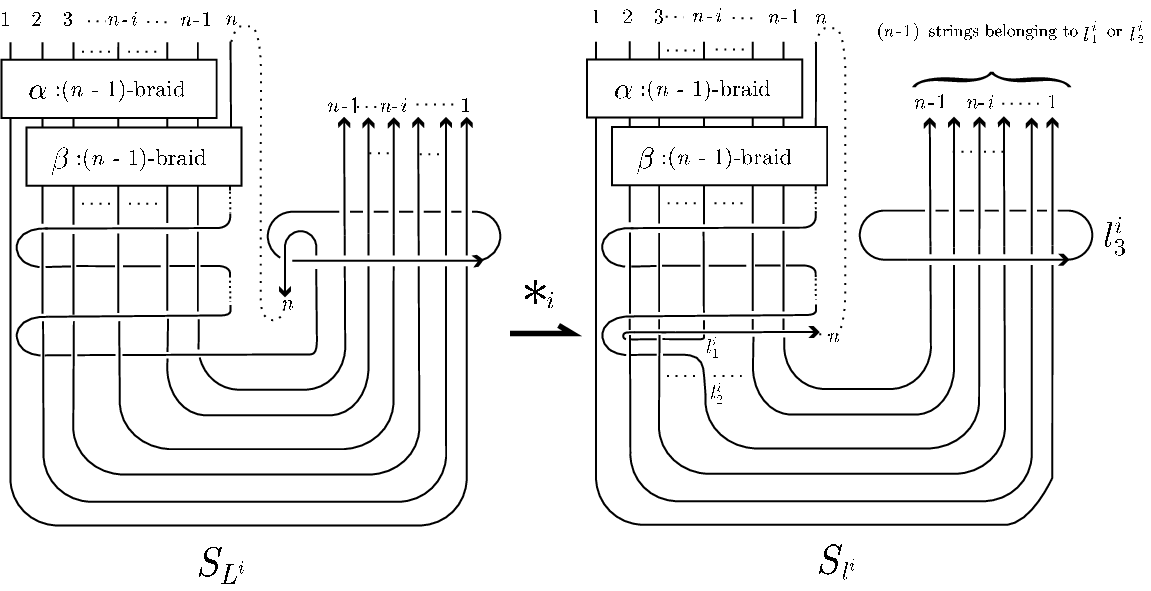}
\end{center}
\caption{}
\label{deltak2}
\end{figure}

Let $n-i = x_j$. Then 
\begin{align*}
(x_j, n)(x_1, x_2, \ldots, x_{n-1}, n)\
= (x_1, x_2, \ldots, x_{j-1}, n)(x_j, \ldots ,x_{n-2}, x_{n-1}).
\end{align*}
The cyclic permutations $(x_1, x_2, \ldots, x_{j-1}, n)$ and 
$(x_j, \ldots ,x_{n-2}, x_{n-1})$
correspond to $l_1^i$ and $l_2^i$, respectively.
Remark that the string of $S_{l^i}$ with lower end point 
$n$ belongs to $l^i_1$, and it does not contribute now
to $lk(l_1^i \cup l_3^i)$. By Lemma \ref{a_3}, 
\begin{align*}
a_3(L^i)-a_3(L^{i-1})&=lk(l_1^i \cup l_3^i) - lk(l_2^i \cup l_3^i) \\ 
&= (j-1) - (n-j) = 2j - n - 1.
\end{align*}

Suppose that $x_l = 1$, then \\[2mm]
\begin{align*}
a_3(L_{b_{m+1}})-a_3(L_{b_{m}}) &= \{ \text{the change in} \ a_3 \
 \text{resulting from} \ \Delta_0 \}\\
&\quad \quad \quad \quad  \quad \quad \quad+ \sum _{k=1}^{n-2}
 \{ \text{the change in} \ a_3 \ \text{resulting from} \ \Delta_k \}\\
&=(n-1) + \sum _{j=1}^{n-1}(2j -n -1)-(2l - n-1)= -2l+n+1.
\end{align*}

The difference $-2l+n+1$ is a constant which does not depend on $m$.
If it is non-zero, the sequence $\{a_3(L_{b_p}), p \in \bN \}$ forms 
an arithmetic progression with non zero common difference.
When $n$ is even, $-2l+n+1$ is odd. This means that $-2l+n+1 \neq 0$. 
When $n$ is odd, namely $n = 2n'+1$ for some $n' \in \bN$,
then $-2l+n+1= 2(n'-l+1)$. Unless $l=n'+1$, we have $-2l+n+1 \neq 0$.  
The equation $l=n'+1$ means that $x_{n'+1}=1$. Therefore
$a_3(L_{b_{m+1}})-a_3(L_{b_{m}})$ is non-zero and independent of $m$,
unless $n=2n'+1\es (n' \in \bN)$ and $x_{n'+1}=1$.
This completes the proof of Proposition \ref{l12}. 
\end{proof}

\section{Remaining knot cases \label{S5}}

{}From now on we assume that $n = 2n'+1$ and $\pi(b)=(x_1, x_2,
\ldots, x_{n-1}, n)$ with $x_{n'+1}=1$. To prove that $b_m$ are
non-conjugate, we will look at $b^2_m$: if two braids
are conjugate, so are their squares. Note that, when $n$ is odd
and $\pi(b)$ is a cycle, so is $\pi(b^2)$. Thus $L_{b^2_m}$ are
again 2-component links. We will show the following:

\begin{pro}\label{l13}
Let $b$ be an $n$-braid admitting an exchange move.
If $n = 2n'+1>3$ odd and $\pi(b)=(x_1, x_2, \ldots, x_{n-1}, n)$ with
$x_{n'+1}=1$, then $a_3(L_{b^2_m})$ is a quadratic polynomial in $m$
with non-zero quadratic term.
\end{pro}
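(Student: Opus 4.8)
The plan is to rerun the argument of Proposition \ref{l12}, but with $b_m$ replaced by its square $b_m^2$, tracking the dependence on $m$ one order more carefully. First I would record the two structural facts that make this possible. Since $n$ is odd, $\gcd(2,n)=1$, so $\pi(b^2)=\pi(b)^2$ is again a single $n$-cycle; hence $\hat{b_m^2}$ is a knot and $L_{b_m^2}=\hat{b_m^2}\cup k$ is a $2$-component link. Moreover its linking number $lk(\hat{b_m^2},k)=n$ is independent of $m$, since each of the $n$ strands of the closed braid crosses the spanning disk of the axis exactly once. By Hoste's formula (Theorem \ref{th}) this linking number is precisely $a_1$, which is therefore constant; this is why the lowest coefficient cannot separate the $b_m^2$ and why one is forced up to $a_3$.

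Next I would set up an incremental comparison between $L_{b_{m+1}^2}$ and $L_{b_m^2}$, imitating Figures \ref{bm}--\ref{deltak}. Passing from $b_m$ to $b_{m+1}$ inserts one restricted full twist $\Dl_{[2,n-1]}^{2}$; in the square this twist region is traversed twice, so $b_{m+1}^2$ differs from $b_m^2$ by an insertion in each of the two copies. I would therefore realize the isotopy carrying $L_{b_{m+1}^2}$ to $L_{b_m^2}$ by a sequence of delta moves $\Dl_i$ together with auxiliary moves $*_i$, producing $3$-component intermediate links $l^i=l_1^i\cup l_2^i\cup l_3^i$, and apply Lemma \ref{a_3} to each: every $\Dl_i$ contributes $lk(l_1^i,l_3^i)-lk(l_2^i,l_3^i)$ to the change in $a_3$. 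The permutation bookkeeping is the same in spirit as before---the move $*_i$ multiplies the relevant permutation by a transposition $(n-i,n)$, splitting a cycle into two cycles whose lengths pin down the $lk$'s---except that the ambient permutation is now $\pi(b)^2$ and the decomposition runs over the doubled strand pattern.

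The crucial new phenomenon, and the source of the quadratic term, is that the intermediate links $l^i$ now contain a component that itself winds $m$ times through the \emph{other} twist region. Consequently the linking numbers $lk(l_a^i,l_3^i)$ are no longer constant in $m$ (as they were in Proposition \ref{l12}) but \emph{affine} in $m$: each full twist already present in the companion box adds a fixed amount to the mutual linking of the two passes of the knot. Summing the Lemma \ref{a_3} contributions over all $\Dl_i$ then yields a first difference $a_3(L_{b_{m+1}^2})-a_3(L_{b_m^2})$ that is a degree-$1$ polynomial in $m$. A first difference which is linear with nonzero leading coefficient forces $a_3(L_{b_m^2})$ to be a genuine quadratic in $m$ with nonzero quadratic term, which is the assertion.

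The main obstacle is twofold and lies entirely in this last computation. First, the strand and permutation bookkeeping for $b^2$ is markedly more intricate than for $b$: I must identify, for each $i$, which doubled strands lie in $l_1^i$ versus $l_2^i$, using the cycle structure of $\pi(b)^2$ under the standing hypothesis $x_{n'+1}=1$, and read off the $m$-dependent linking numbers. Second, and most importantly, I must verify that the coefficient of $m^2$ does \emph{not} vanish. This is the delicate point, since the hypotheses $n=2n'+1$ and $x_{n'+1}=1$ are exactly those for which the linear-in-$m$ obstruction of Proposition \ref{l12} collapsed; one must check that no analogous cancellation occurs at the quadratic level. I expect the nonvanishing to reduce, after summing the arithmetic-progression-type contributions, to a closed form in $n$ (and the position data of $\pi(b)$) that is manifestly nonzero for $n>3$, and establishing this closed form is the heart of the proof.
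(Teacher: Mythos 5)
Your overall scheme---compare $L_{b_{m+1}^2}$ with $L_{b_m^2}$, show the first difference of $a_3$ is linear in $m$, and conclude quadraticity---agrees in spirit with the paper, which implements it via band cancellations (\figurename~\ref{mtwists2}--\ref{mtwists4}) and the skein identity \eqref{app2} rather than a literal rerun of the delta-move bookkeeping of Proposition \ref{l12}. But the proposal has two genuine gaps. First, you tacitly assume that the $m^2$-coefficient is a function of $n$ and the permutation data alone (``a closed form in $n$ and the position data of $\pi(b)$''). That is not automatic: in the squared setting the intermediate $3$-component links carry linking numbers that see the actual crossings of $\alpha$ and $\beta$, not just $\pi(b)$. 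The paper must first normalize $b$ to the form \eqref{app3} (with $\alpha=\alpha'\sigma_1^{-1}$, $\beta=\beta'\beta_0$, $\alpha',\beta'$ pure) and then prove Lemma \ref{a_3_2}: the quadratic coefficient $A$ is independent of $\alpha'$ and $\beta'$, because these braids shift the relevant linking numbers only by additive constants. That this independence is a substantive issue and not a formality is shown by the parallel link case: in the proof of Theorem \ref{th_l} the analogous quadratic term \eqref{__} \emph{does} depend on linking numbers $\lambda$ inside $\alpha',\beta'$ and can vanish (it does for $n=9$), which is exactly why an additional mirroring argument is needed there. Your plan contains no step playing the role of Lemma \ref{a_3_2}, so the reduction to a universal coefficient is unjustified as stated.

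Second, the nonvanishing itself---which you correctly identify as the heart---is left as an expectation, and the expectation of a ``manifestly nonzero'' closed form is not borne out. After reducing to trivial $\alpha',\beta'$, the paper computes $2A=a_3(K_1)+a_3(K_{-1})-2a_3(K_0)=:D_n$, observes that $\pi(b^2)$ shifts the intermediate strands by $4$, so that replacing $n$ by $n+4k$ changes the component linking numbers by multiples of $k$ and $D_{n+4k}$ is quadratic in $k$ with a mod-$4$ case split; the values $D_{5+4k}=-40-72k-32k^2$ and $D_{7+4k}=56+88k+32k^2$ are then pinned down by explicit (computer-assisted) evaluation of $a_3$ on $18$ diagrams. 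The two branches have opposite signs and both vanish at $k=-1$, so nothing about the answer is visibly forced; it requires the normalization lemma plus finitely many concrete computations. Without these steps your argument establishes at most that $a_3(L_{b_m^2})$ is a quadratic polynomial in $m$, not that its quadratic term is nonzero, and the latter is precisely what Proposition \ref{l13} asserts.
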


In particular, there are at most two $L_{b^2_m}$ with equal $a_3$,
and so at most two of $b_m$ are conjugate. Thus with proposition
\ref{l13}, the proof of theorem \ref{main'} for knots will be complete.

\begin{proof}[Proof of proposition \ref{l13}]
Let us first simplify the form of $\ap$ and $\bt$ in 
\figurename \ \ref{braid}.

First, every permutation of $2,\dots,n-1$ applied on either side of
$\pi(\ap)$ can be realized by a braid which can be moved into $\bt$.
Thus we can achieve that $\pi(\ap)=(1, 2)$. So 
\begin{equation}\label{app}
\ap=\ap'\cdot\sg_1^{-1}
\end{equation}
for some pure braid $\ap'$ on strands $1,\dots,n-1$.

Then $\pi(\bt)=(x'_1, x'_2, \ldots, x'_{n-2}, n)$ is a cycle with 
$x'_{n'}=2$ and $n>x_j'>2$ otherwise. Now, any permutation of these
$x_j'\ne 2,n$ can be realized by conjugating $\bt$ with a permutation
of $3,\dots,n-1$. Since we achieved that $\pi(\ap)$ fixes all of
these, the permutation of $x_j'\ne 2,n$ in $\pi(\bt)$ can be achieved
by a conjugation of $b=\ap\cdot\bt$, at the cost of multiplying
$\ap$ by some pure braid on strands $1,\dots,n-1$, which we can
absorb into $\ap'$ of \eqref{app}. 

This means that we can assume that we can write
\[
\bt=\bt'\cdot\bt_0\,,
\]
for some pure braid $\bt'$ on strands $2,\dots,n$, as long as $\bt_0$
is a braid on strands $2,\dots,n$ with $\pi(\bt)$ being a cycle with
$x'_{n'}=2$ when $x'_{n-1}=n$. In the following we will choose and fix
\[
\bt_0=\,\sg_3^{-1}\sg_5^{-1}\cdot\ldots\cdot\sg_{n-2}^{-1}\cdot
\sg_2^{-1}\sg_4^{-1}\cdot\ldots\cdot\sg_{n-1}^{-1}\,.
\]

\begin{lem}\label{a_3_2}
We have $a_3(L_{b_m^2})-a_3(L_{b^2})=Am^2+Bm+C$ for some (rational)
numbers $A,B,C$ (depending, \em{a priori}, on $n$ and $b$). Moreover,
$A$ does not depend on the braids $\ap'$, $\bt'$ in the presentation 
\begin{equation}\label{app3}
b=\ap'\cdot\sg_1^{-1}\cdot \bt'\cdot\bt_0\,.
\end{equation}
\end{lem}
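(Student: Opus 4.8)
The plan is to rerun the delta-move computation of Proposition \ref{l12}, but on the squared braid $b_m^2$ instead of $b_m$. Exactly as in \figurename \ \ref{bm}, the closures of $b_m^2$ and $b_{m-1}^2$ (with the axis adjoined) differ by the removal of one unit of twisting; I would realize this difference by a sequence of delta moves $\Dl_i$ and accompanying moves $*_i$ of the types drawn in \figurename \ \ref{local}, \ref{delta0} and \ref{deltak}, and then apply Lemma \ref{a_3} at each step. This expresses $a_3(L_{b_m^2})-a_3(L_{b_{m-1}^2})$ as a sum of differences $lk(l_1^i\cup l_3^i)-lk(l_2^i\cup l_3^i)$ of linking numbers of intermediate three-component links. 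By \eqref{alexm} all of the $m$-dependence of $b_m$ is concentrated in the full twists $\Dl_{[2,n-1]}^{\pm 2m}$ on the middle strands (the boxes $\pm m$ of \figurename \ \ref{bwitht}), so that $b_m^2$ carries two such twist regions, each with exponent proportional to $m$.

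The decisive difference from Proposition \ref{l12} is that these auxiliary linking numbers are no longer constant in $m$. A sub-component that traverses a full-twist region $\Dl_{[2,n-1]}^{\pm 2m}$ gains, against any other sub-component, a linking contribution equal to $\pm m$ times the number of strands of that other sub-component trapped inside the twist. Linking number is computed by a signed count of crossings; it is therefore additive over the finitely many twist regions and linear in each twist exponent, so each $lk(l_j^i\cup l_3^i)$ is an \emph{affine} function of $m$, with no term quadratic in $m$ arising inside a single linking number. Hence $a_3(L_{b_m^2})-a_3(L_{b_{m-1}^2})$ is affine in $m$, and summing this telescoping difference from $1$ to $m$ produces a quadratic polynomial
\[
a_3(L_{b_m^2})-a_3(L_{b^2})\,=\,Am^2+Bm+C
\]
with rational $A,B,C$, which is the first claim of the lemma.

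For the second claim I would isolate the coefficient of $m$ in the per-step difference, since $A$ equals one half of it. By the previous paragraph that coefficient is built entirely from twist exponents times counts of trapped strands, and such a count is determined solely by which sub-component each strand belongs to at the height of a twist region, i.e.\ by the braid permutation $\pi(b)$. In the presentation \eqref{app3} we have $\ap=\ap'\sg_1^{-1}$ with $\ap'$ pure and $\bt=\bt'\bt_0$ with $\bt'$ pure, so altering $\ap'$ or $\bt'$ does not change $\pi(b)$ and hence does not change any of these trapped-strand counts; these pure braids can contribute only $m$-independent linkings, which are absorbed into $B$ and $C$. Therefore $A$ is independent of $\ap'$ and $\bt'$.

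The main obstacle will be the honest geometric bookkeeping for the squared braid. Setting up the delta-move sequence for $b_m^2$ is more delicate than for $b_m$, because the transition now meets both twist regions as well as the copy of the middle block sitting between them, and one must keep careful track of which of the $l_1^i,l_2^i,l_3^i$ threads each twist. The point that needs genuine care is the claim that the $m$-linear part of every intermediate linking number is a pure strand count: one has to verify that routing strands through the pure braids $\ap',\bt'$ never alters which strands a given sub-component encircles while passing a twist region, so that the quadratic coefficient $A$ indeed decouples from the $\ap',\bt'$-dependent data.
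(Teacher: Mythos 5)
Your route is genuinely different from the paper's, and in outline it is the natural first attempt: you transplant the delta-move telescoping of Proposition \ref{l12} to the squared braid, express each step via Lemma \ref{a_3}, argue that the intermediate linking numbers are affine in $m$ because linking is linear in the twist exponents, and locate the $m$-linear parts in trapped-strand counts that depend only on $\pi(b)$, hence not on the pure braids $\alpha',\beta'$. The paper does not proceed this way: it first simplifies $L_{b_m^2}$ by band calculus (\figurename~\ref{mtwists2}--\ref{mtwists4}), uses Lemma \ref{a_3} only for the auxiliary step of deleting the band $\gamma$ at linear-in-$m$ cost, and then applies the skein relation \eqref{nbs} to the four crossings in the encircled region, obtaining \eqref{app2} and evaluating the resulting three-component links $L_i$ by Hoste's formula (Theorem \ref{th}); there, exactly one linking number of each $L_i$ (the one not involving the axis) is linear in $m$, and $\alpha',\beta'$ shift linking numbers only by additive constants.

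The decisive step of your plan, however, is a genuine gap and not mere bookkeeping. In Lemma \ref{a_3} the jump of $a_3$ is $lk(k_2,k_3)-lk(k_3,k_1)$, i.e.\ it involves linkings with the \emph{third} component only; in the application in \S\ref{S4} that third component is the unknotted circle encircling the strands, so these linkings are pure strand counts ($j-1$ and $n-j$), independent of $m$. If your move sequence for $b_m^2$ had the same structure, the telescoping would produce an $m$-independent common difference and the false conclusion that $a_3(L_{b_m^2})$ is \emph{linear} in $m$, contradicting the nonvanishing quadratic term established in Proposition \ref{l13}. So the $m$-dependence you invoke can only enter through moves whose third strand is a knot strand threading the remaining $\pm m$ boxes, and you neither construct such a sequence, nor verify that the crossing changes forced by dragging a strand past the second copy of the twist boxes organize into moves of the exact local type (with the prescribed strand-connection pattern) covered by Lemma \ref{a_3}, nor that the number of moves stays bounded independently of $m$ (the strand must pass over or under an entire twist box, never through it). This is precisely the difficulty the paper sidesteps by the band cancellation of \figurename~\ref{mtwists3} followed by the skein resolution \eqref{app2}, which after two smoothings lands in three-component links where Theorem \ref{th} applies directly; note that even in the link case the analogous manoeuvre requires the additional Lemma \ref{a_3'}. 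Until you supply the move-by-move analysis, or a variant of Lemma \ref{a_3} for the configurations that actually occur, both claims of the lemma are asserted rather than proved.
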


\begin{proof}
The axis link of $L_{b_m^2}$ can be simplified similarly to
\figurename \ \ref{mtwists}. In this case we involve the
up going strand also on the left of $b$. Now we can cancel the
full twists on $n-2$ strands in $b_m^2$ by creating pairs of bands
that circle, in the opposite way, around the middle $n-2$ strands.
See \figurename \ \ref{mtwists2}. It shows the case $n=7$ and $m=1$.
(One of the pairs of circling bands, the one at the bottom, untangles,
so we have 3 such pairs.) We indicate the braids $\ap'$ and $\bt'$
just by a dashed line, showing where they have to be inserted.

\begin{figure}[htbp]
\begin{center}
  \includegraphics*[scale=0.9]{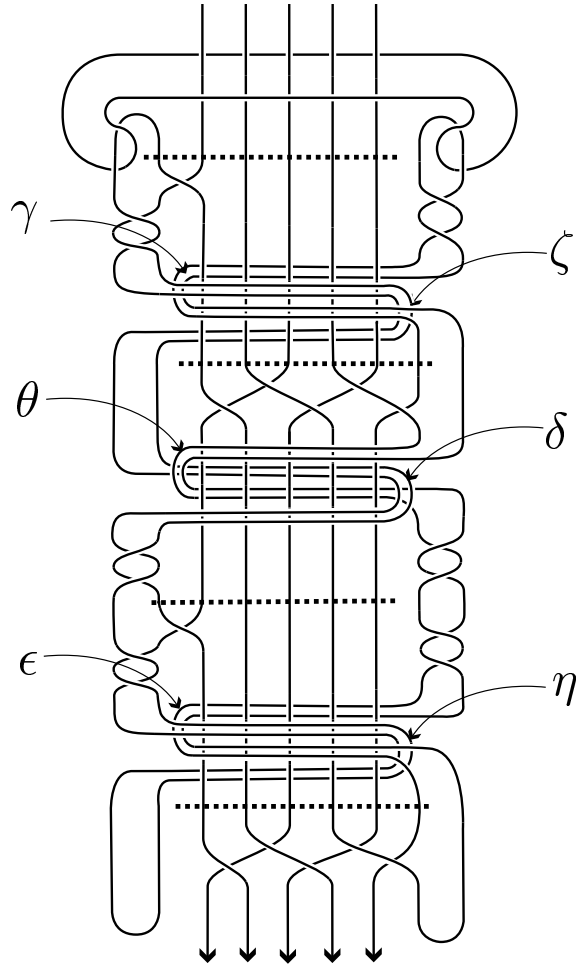}
\end{center}
\caption{$L_{b_m^2}$ (for $m=1$)}
\label{mtwists2}
\end{figure}

Now the bands $\dl$ and $\zeta$ cancel, and $\eta$ trivializes. Then,
$\th$ and $\eps$ cancel by a half-turn (and all their internal twists 
cancel), but to cancel them
further, we need to move the band past $\dl\zeta$ in the encircled
region of \figurename \ \ref{mtwists3}. (For general $m$, the parts
$\delta$ and $\zeta$ will have $|m|-1$ full turns of the band around
the other $n-2$ strings in the opposite direction.

\begin{figure}[htbp]
\begin{center}
  \mbox{\includegraphics*[scale=0.7]{}}
\end{center}
\caption{Simplified $L_{b_m^2}$ (for $m=1$)}
\label{mtwists3}
\end{figure}

Next, $\gm$ can be deleted
at the cost of changing $a_3$ by a quantity linear in $m$
(whose linear terms may depend on $n$, $\ap'$ and $\bt'$).
This can be seen from lemma \reference{a_3}, in the way
we applied it in \S\reference{S4}. It must be realized
that, in spite of the bands $\delta$ and $\zeta$ in the lower part of
the figure, the linking number of $l_1^i$ and $l_2^i$ with
$l_3^i$ does not depend on $m$. Thus the change of $a_3$
under undoing one full twist of $\gm$ does not depend on $m$
either.

This means that, for the purpose of proving lemma \ref{a_3_2}, we
can disregard the band $\gm$, and so we assume that it is trivial. 
Then we obtain from $L_{b_m^2}$ the links $K_m$ as shown (for $m=1$
and $n=7$) in \figurename \ \reference{mtwists4}. 

\begin{figure}[htbp]
\[
\begin{array}{cc}
  \mbox{\vcbox{\includegraphics*[scale=0.8]{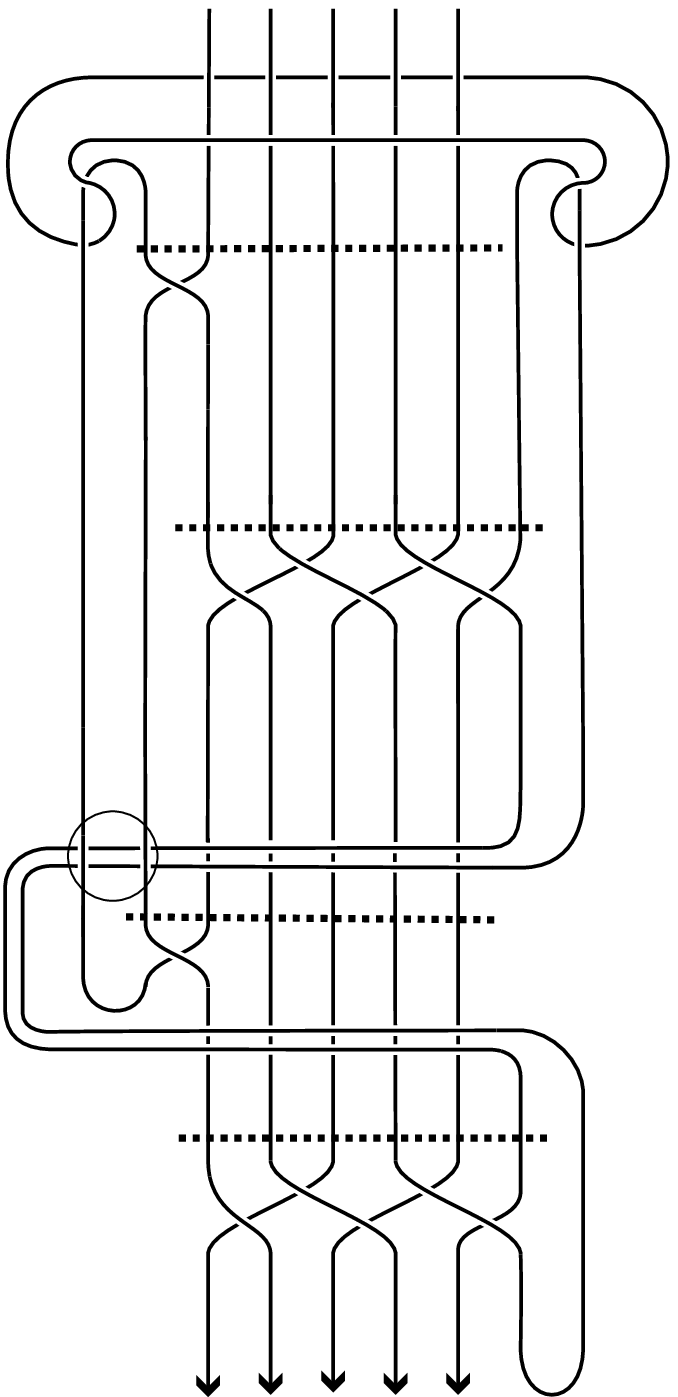}}} &
  \mbox{\vcbox{\includegraphics*[scale=0.8]{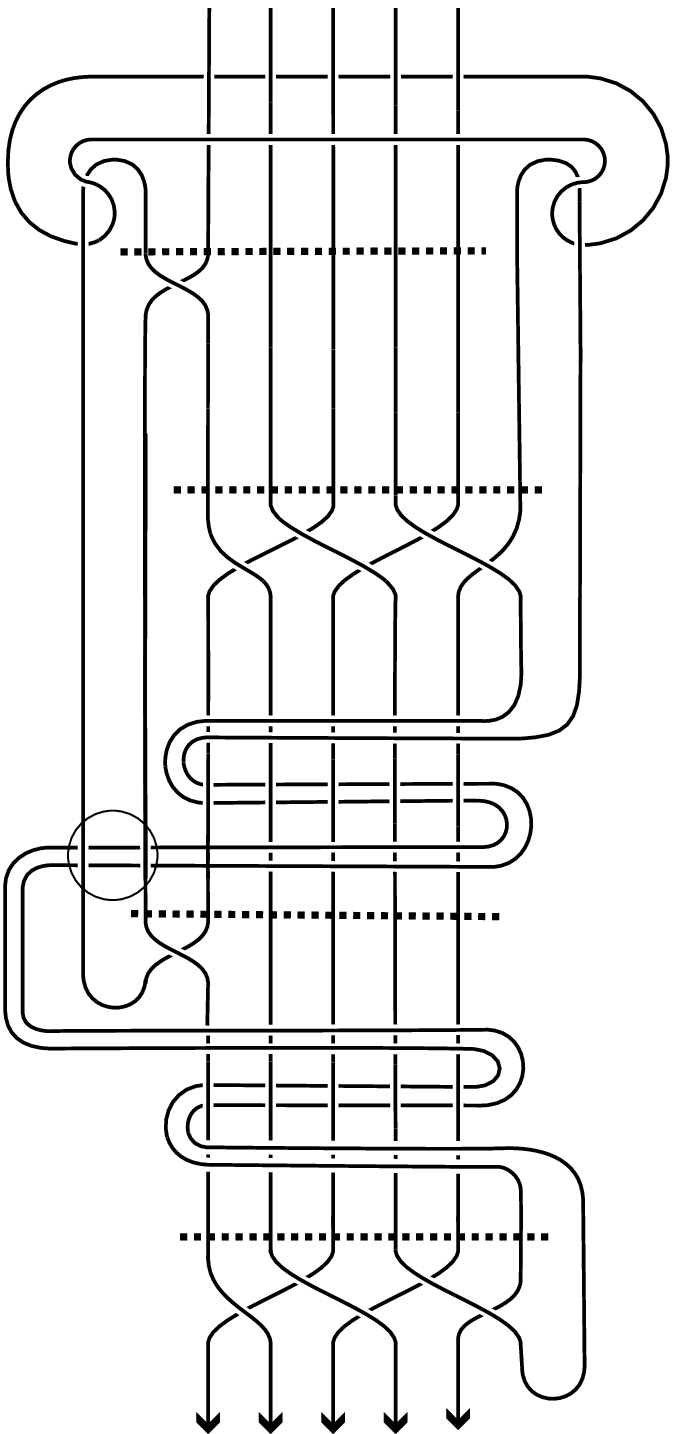}}} \\
  \ry{1.3em}K_{1} & K_{2}
\end{array}
\]
\caption{$K_{m}$}
\label{mtwists4}
\end{figure}

Using the relation \eqref{nbs}, we can write
\begin{equation}\label{app2}
a_3(K_{m+1})-a_3(K_{m})\,=\,a_2(L_1)-a_2(L_2)-a_2(L_3)+a_2(L_4)\,,
\end{equation}
where $L_i=L_{m,i}$ are 3-component links obtained from $K_{m+1}$
by changing some and smoothing exactly one of the 4 crossings in
the encircled part.

Now, it is easy to observe that among the 3 linking numbers between
the components of each $L_i$, only one (the one not involving the
braid axis) depends, linearly, on $m$ (a dependence which holds
for either signs of $m$), and $\ap'$ and $\bt'$ affect
all 3 linking numbers only by some additive constant.

It follows then from theorem \reference{th} that 
$a_3(K_{m+1})-a_3(K_{m})$ is a linear expression in $m$ with a linear
term independent on $\ap'$ and $\bt'$. By inductive iteration, we
obtain the claim of lemma \reference{a_3_2}.
\end{proof}

With lemma \reference{a_3_2}, for the proof of proposition
\reference{l13}, it is legitimate to assume that $\ap'$ and
$\bt'$ are trivial, and \eqref{app3} becomes
\[
\bt_0=\,\sg_1^{-1}\sg_3^{-1}\cdot\ldots\cdot\sg_{n-2}^{-1}\cdot
\sg_2^{-1}\sg_4^{-1}\cdot\ldots\cdot\sg_{n-1}^{-1}\,.
\]
It is not difficult to evaluate the
quadratic coefficient $A$ in the lemma for this special case.

Now,
\[
2A\,=\,a_3(K_1)+a_3(K_{-1})-2a_3(K_0)\,,
\]
and so it is enough to show that the r.h.s.\ of this
equation, call it $D_n$, does not vanish for any odd $n\ge 5$.

Again, one can express $a_3(K_1)-a_3(K_0)$ and
$a_3(K_0)-a_3(K_{-1})$ using \eqref{app2}. 
Next, observe that, essentially because the action of
$\pi(b^2)$ on intermediate strands is to shift by $4$
to left or right, the replacement of any odd $n\ge 5$ by
$n+4k$ alters the component linking numbers of the
links $L_i$ in \eqref{app2} by multiples of $k$.

It follows then that $D_{n+4k}$ is a certain quadratic
expression in $k\ge 0$ for $n=5$ and $n=7$. To determine
these expressions, one can make a direct calculation
using \eqref{app2} and theorem \reference{th}. This is,
however, somewhat tedious and error-prone. Thus we
used also a different method for verification.

We drew, as in \figurename \ \ref{mtwists5}, the links $K_{\pm 1}$
and $K_0$ for $k=0,1,2$ in either case (i.e. $n=5,7,\dots,15$),
and calculated $c_i=a_3(K_i)$ by computer. 

\begin{figure}[htbp]
\begin{center}
\includegraphics*[scale=0.4]{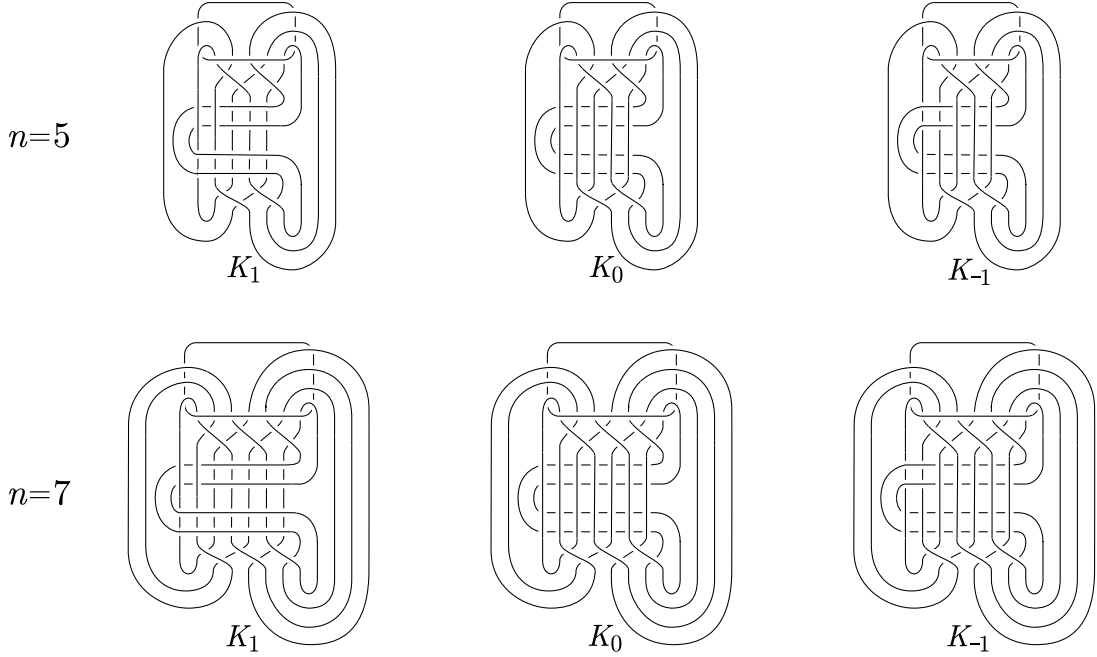}
\end{center}
\caption{$K_{m}$ for $|m|\le 1$ and $n=5,7$}
\label{mtwists5}
\end{figure}


The most complicated diagrams have 118 crossings, but it took
a total of 10.5 seconds to evaluate $a_3$ on all 18 diagrams using
the skein polynomial truncation algorithm of \cite{St3}.
The result is shown below:\\[-0.3mm]
%
\[
\begin{mytab}{|c||r|r|r||r|r|r|}{&&&&&&}
\hline
\Y $n$      & \mc5   & \mc9   &\mC{13}& \mc7 &\mc{11}&\mc{15} \\[1.5mm]
\hline
\hline
\Y $c_{1}$  & $-13$  & $-38 $ & $-59$ & $38$ & $137$ & $312$  \\[1mm]
   $c_{0}$  & $5  $  & $30 $  & $91 $ & $14$ & $55$  & $140$  \\[1mm]
   $c_{-1}$ & $-17$  & $-46$  & $-71$ & $46$ & $149$ & $328$  \\[1.5mm]
\hline
\end{mytab}
\]

{}From this one determines that
\[
D_n\,=\,\left\{\,\begin{array}{c@{\quad\mbox{if}\ }l}
-40-72k-32k^2 & n=5+4k \\[0.5mm]
56+88k+32k^2 & n=7+4k
\end{array}\,\right..
\]
This is never zero for any $k\ge 0$. (It vanishes, however, 
for $k=-1$, which is in nice accordance with the triviality of
the cases $n=1,3$.) With this the proof of proposition
\reference{l13}, and therefore also of theorem \ref{main'}
\emph{for knots}, is concluded.
\end{proof}

\section{The first case of links}

We now move to the case of links in theorem \ref{main'}. 
A few of the links can be dealt with by a sublink argument,
but the situation seems more complicated in general.
We split the treatment of links into two major cases,
depending on whether $1$ and $n$ belong to the same or to
distinct cycles of $\pi(b)$.

\begin{thm}\label{th_l}
Assume a braid $b\in B_n$ admits an exchange move, and $1$ and $n$
belong to the same cycle of $\pi(b)$. Then the link $\hat b$
has infinitely many non-conjugate $n$-braid representations.
\end{thm}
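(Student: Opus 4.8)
The plan is to deduce Theorem~\ref{th_l} from the knot case already settled in Propositions~\ref{l12} and~\ref{l13}, by restricting the axis link to the component that carries the exchange. As before I set $b_0=b$ and let $b_m=\ap\kp^m\bt\kp^{-m}$ be the braids obtained by iterating the exchange move \eqref{alexm}; all of them close to $\hat b=L$. Because the exchange move preserves the closure, every pairwise linking number of the axis link $L_{b_m}$ is independent of $m$, so by Theorem~\ref{th} the lowest coefficient of $\nb(L_{b_m})$ is constant and carries no information. I must therefore locate the $m$-dependence in the next coefficient $a_3$, and to keep $a_3$ the \emph{second} coefficient I work with $2$-component sublinks of $L_{b_m}$, where Lemma~\ref{a_3} is directly applicable.

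The key observation is that $\kp=\Dl_{[1,n-1]}^2\Dl_{[2,n-1]}^{-2}$ becomes trivial when restricted (by deleting strands) to most collections of strands: deleting strands from a full twist leaves a full twist on the survivors, so on a set $S$ of strands one obtains $\Dl_{S}^2\Dl_{S\setminus\{1\}}^{-2}$, which cancels unless $S$ contains strand $1$ \emph{together with} at least one middle strand $2,\dots,n-1$. Now for each component $C_i$ of $\hat b$ the $2$-component sublink $k\cup C_i$ of $L_{b_m}$ is exactly the axis link $L_{b_m|_{C_i}}$ of the restricted braid. By the observation this sublink is independent of $m$ for every $C_i$ \emph{except} the component $C_1$ containing both strands $1$ and $n$; this is precisely the point at which the hypothesis that $1$ and $n$ lie in a common cycle is used. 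Writing $c_1$ for the length of that cycle, the braids $b_m|_{C_1}$ form an exchange-move sequence of $c_1$-braids closing to the \emph{knot} $C_1$, and since the cycle has length at least two their permutation fixes neither the first nor the last strand.

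I would then feed $b_m|_{C_1}$ into Propositions~\ref{l12} and~\ref{l13}: as long as $c_1\ge 4$, the quantity $a_3(k\cup C_1)=a_3(L_{b_m|_{C_1}})$ is a nonconstant polynomial in $m$ --- an arithmetic progression with nonzero common difference, or a genuine quadratic obtained after passing to $b^2$ in the parity-exceptional case of Proposition~\ref{l13}. To promote this into an obstruction that is blind to any relabelling of the components, note that a conjugacy of $b_m$ induces an ambient isotopy of $L_{b_m}$ fixing the axis $k$ and merely permuting the remaining components; hence the multiset $\{\,a_3(k\cup C_i)\,\}_i$, and in particular the sum $\sum_i a_3(k\cup C_i)$, is a conjugacy invariant of $b_m$. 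Every term but the $C_1$-term is constant in $m$, so this sum inherits the nonconstancy of $a_3(k\cup C_1)$, and therefore $b_m$ and $b_{m'}$ can be conjugate for only finitely many pairs. This yields the asserted infinitely many non-conjugate $n$-braid representations.

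The hard part is the degenerate range $c_1\le 3$. Here the reduction fails: the knot propositions require at least four strands, and indeed for $c_1=3$ both the linear and (after squaring) the quadratic contributions can vanish --- matching the noted triviality of the cases $c_1=1,3$ --- while for $c_1=2$ the sublink $k\cup C_1$ is altogether independent of $m$, since then $C_1$ has no middle strand for strand $1$ to wind around. In these cases no $2$-component sublink detects the exchange, and the entire $m$-dependence is pushed into a genuinely multi-component, second-order interaction (a triple-linking or Sato--Levine-type term in the second nonvanishing coefficient of $\nb$), which must be read off a sublink $k\cup C_1\cup C_j$ with $C_j$ a middle component encircled by strand $1$. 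Establishing nonconstancy of this higher interaction, rather than the clean $2$-component reduction above, is where the real effort of the same-cycle case concentrates; the distinct-cycle configurations are then handled separately in the final section.
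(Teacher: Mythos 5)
Your reduction for the case that the common cycle $C$ has length at least $4$ is sound and is exactly the paper's opening move: restricting $b_m=\alpha\kappa^m\beta\kappa^{-m}$ to the strands of a cycle kills $\kappa$ unless the cycle contains strand $1$ together with a middle strand, so only the sublink $k\cup C_1$ of $L_{b_m}$ (or of $L_{b_m^2}$) moves with $m$, and the knot machinery of Propositions \ref{l12} and \ref{l13} applies to the restricted braid; the paper states this in one sentence ("If it has length $>3$, then looking at a sublink of $L_{b_m}$ or $L_{b_m^2}$\dots we are done"). Two corrections, though, on your degenerate range. First, the case $c_1=2$ does not exist: since $\beta$ lives on strands $2,\dots,n$ and $\alpha$ on strands $1,\dots,n-1$, no strand can land on position $1$ after passing through $\beta$, so the cycle through $1$ and $n$ can never be a transposition. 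The paper rules it out outright, whereas you leave it open as an "undetectable" case; as written your argument would not prove the theorem for such braids, which is a phantom gap only because the case is vacuous.

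The genuine gap is $c_1=3$, which is where essentially the entire proof of Theorem \ref{th_l} actually lives, and your closing sketch points in a direction the paper explicitly shows is blocked. You propose to detect $m$ in "the second nonvanishing coefficient of $\nabla$" of a sublink $k\cup C_1\cup C_j$ of $L_{b_m}$ (a triple-linking/Sato--Levine-type term); but Lemma \ref{a_3'} is proved precisely to show that $a_4$ of that $3$-component axis link is \emph{invariant} under the relevant move, so this route fails and one is forced to pass to the square, i.e.\ to $L_{b_m^2}$ with (for $n$ odd) one component of the squared subbraid deleted. The remaining work, absent from your proposal, is substantial: one normalizes $b$ to the form $\alpha'\sigma_1\beta'\beta_0$ with the band braid $\beta_0$ of \eqref{5.7}, runs the band-cancellation and skein analysis on $4$-component links to show that $[a_4(L'_{b_m^2})]_{m^2}$ is an affine function of a single integer $\lambda$ built from six strand linking numbers inside $\alpha',\beta'$, computes this function explicitly as in \eqref{__}, and then --- because the result \emph{can} vanish for an integer value of $\lambda$ (it does for $n=9$) --- invokes a mirroring argument under which $\lambda$ changes sign, so that nonvanishing of the $\lambda$-free term decides the matter up to taking mirror images. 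Your proposal contains neither the obstruction (Lemma \ref{a_3'}), nor the passage to squares in this case, nor any counterpart of the mirroring step needed for the residual zeros, so the hard case of the theorem is identified but not proved.
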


The following is an analogue of lemma \reference{a_3}.

\begin{lem}\label{a_3'}
Let $a_{[k]}(L)=a_{n(L)+k}(L)$, with $n(L)$ being the number of
components of $L$. We have then
\[
a_{[1]} \left(\hspace{3pt}%
\mbox{\vcbox{%
\includegraphics*[scale=1.3]{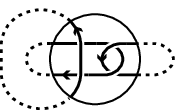}%
}}\hspace{2pt}\right) 
=
a_{[1]} \left(\hspace{3pt}\mbox{\vcbox{%
\includegraphics*[scale=1.3]{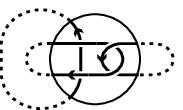}%
}}%
\hspace{2pt}\right)\,,
\]
where we allow further components to be placed (entirely) outside
the encircled spot.
\end{lem}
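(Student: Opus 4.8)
The plan is to prove the identity by the same skein-theoretic bookkeeping that underlies Lemma \ref{a_3}, showing that the second Conway coefficient is insensitive to the modification inside the encircled disk. First I would connect the two local pictures by a short sequence of elementary diagram moves supported entirely in the disk, of exactly the type to which Lemma \ref{a_3} applies (a local move together with the associated smoothing $*$ producing a three-component link $l=k_1\cup k_2\cup k_3$). Since the two diagrams agree outside the disk, each elementary step alters the diagram only locally, so the total change in $a_{[1]}$ is the sum of the jumps recorded by Lemma \ref{a_3}, namely terms of the form $lk(k_2,k_3)-lk(k_3,k_1)$, where $k_3$ is the short component created near the disk by the smoothing. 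Thus it suffices to show that each such correction vanishes.

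The crux is therefore a linking-number comparison, and here the way the strands are connected in the figures is decisive: the two arcs that are regrouped by the smoothing lie on the same global component of the ambient link, forcing $k_1$ and $k_2$ to link $k_3$ identically, so that $lk(k_2,k_3)=lk(k_3,k_1)$ and the jump is zero. The clause allowing further components to be placed entirely outside the disk is precisely what keeps this balance intact: an external component never enters the disk, so it links the local component $k_3$ in a fixed way and contributes equally to $lk(k_1,k_3)$ and to $lk(k_2,k_3)$ (equivalently, $k_3$ cobounds a small disk meeting only the two local strands), and hence cannot break the cancellation. Expressing the relevant lowest-degree linking data through Theorem \ref{th} then gives that every jump is $0$, whence $a_{[1]}$ is unchanged.

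The main obstacle is the bookkeeping rather than any single inequality. One must (i) verify that the chosen sequence of moves actually carries the first picture to the second while remaining inside the disk; (ii) identify correctly, for each move, which global components the three strands $k_1,k_2,k_3$ belong to once the external connections (and any outside components) are taken into account, since it is this connectivity that yields $lk(k_1,k_3)=lk(k_2,k_3)$; and (iii) confirm the degree and parity accounting, namely that the smoothing passes to a link with one more component, so that the contribution relevant to $a_{[1]}$ is exactly the lowest coefficient governed by Theorem \ref{th} and via the skein relation \eqref{nbs} no higher-order terms intrude. Granting these, the asserted equality follows at once from the vanishing of each linking-number correction.
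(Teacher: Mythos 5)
There is a genuine gap, and it sits exactly at the crux of your argument. Your cancellation mechanism --- ``the two arcs that are regrouped by the smoothing lie on the same global component, forcing $lk(k_1,k_3)=lk(k_2,k_3)$'' --- is false as a general principle. Belonging to one global component \emph{before} the smoothing says nothing about how the two resulting pieces $k_1,k_2$ individually wind around $k_3$ \emph{after} it; that is governed by the actual geometry of the strands, not by connectivity. The paper's own use of Lemma \ref{a_3} in \S\ref{S4} is a direct counterexample to your mechanism: there the smoothed strands also come from a single knot component, yet the jump is $lk(l_1^i\cup l_3^i)-lk(l_2^i\cup l_3^i)=(j-1)-(n-j)=2j-n-1$, which is nonzero in general --- indeed the whole proof of Proposition \ref{l12} lives off these nonvanishing jumps. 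So even granting your reduction, the claim that every correction term vanishes would have to be extracted from the specific tangles in the figures, which you never analyze. A second, related problem: Lemma \ref{a_3} is stated and proved for $a_3$ of the two-component situation ($L,L'$ two-component, $l$ three-component). Invoking it for $a_{[1]}$ of links with arbitrarily many outside components requires a generalized delta-move formula in which, by Theorem \ref{th}, the jump is a difference of spanning-tree sums over \emph{all} components, not merely $lk(k_2,k_3)-lk(k_3,k_1)$; such a multi-component analogue is essentially what Lemma \ref{a_3'} is introduced to supply, so your argument is close to circular. Finally, your step (i) --- that the two local pictures are connected by delta moves supported entirely in the disk --- is only asserted. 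Equality of linking matrices gives delta-equivalence of the ambient links (Murakami--Nakanishi), but not delta-equivalence of the \emph{tangles} realized inside the disk, and without an explicit sequence of moves there are no intermediate links $l$ to which any jump formula could be applied.

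For comparison, the paper's proof avoids delta moves entirely. It performs two rounds of crossing switches and tracks the skein corrections from \eqref{nbs}: first the negative crossings on the through-strands are switched on both sides (the correction terms are isotopic links on the two sides, so the claimed equality is equivalent to the switched one); then one positive crossing in the clasp is switched, after which the two sides become isotopic, so the claim reduces to equality of the remaining correction terms. Smoothing that clasp crossing raises the component count by one, which shifts the relevant coefficient from $a_{[1]}$ down to $a_{[-1]}$, the \emph{lowest} nontrivial coefficient --- and there Theorem \ref{th} (Hoste's formula) applies directly: the two smoothed links have identical pairwise linking numbers, hence equal $a_{[-1]}$. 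The moral is that the reduction to the bottom coefficient is what makes Hoste's formula usable without any spanning-tree bookkeeping; if you want to salvage your route, you would need to prove the multi-component delta-move jump formula first and then verify, from the actual figures, that each jump vanishes --- neither of which your proposal carries out.
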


\begin{proof}
By switching the negative crossings on the strands in the tangle on
either side, we see that the claimed equality is equivalent to
\[
a_{[1]} \left(\hspace{3pt}%
\mbox{\vcbox{%
\includegraphics*[scale=1.3]{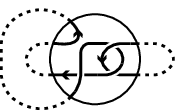}%
}}\hspace{2pt}\right) 
=
a_{[1]} \left(\hspace{3pt}\mbox{\vcbox{%
\includegraphics*[scale=1.3]{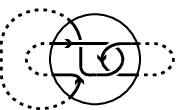}%
}}%
\hspace{2pt}\right)\,.
\]

By switching one positive crossing in the clasp on either side,
we see that this is in turn equivalent to
\[
a_{[-1]} \left(\hspace{3pt}%
\mbox{\vcbox{\includegraphics*[scale=1.3]{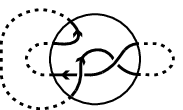}%
}}\hspace{2pt}\right) 
=
a_{[-1]} \left(\hspace{3pt}\mbox{\vcbox{%
\includegraphics*[scale=1.3]{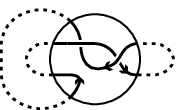}%
}}\hspace{2pt}\right).
\]
This now follows from theorem \reference{th}, since the linking
numbers of all components are the same on either side.
\end{proof}

\begin{proof}[Proof of theorem \reference{th_l}]
It is easy to see from the shape in \figurename \ \ref{braid}
that the cycle $C$ of $\pi(b)$ containing $1$ and $n$ cannot be
a transposition. If it is has length $>3$, then looking at a sublink
of $L_{b_m}$ or $L_{b_m^2}$ and using the argument in the proof of
theorem \reference{main'} for knots, we are done. So assume that
$C$ is of length 3.

We will choose a subbraid of $b$ by taking the strands corresponding
to elements in $C$ and one other cycle of $\pi(b)$. We can choose
this cycle $C'$ arbitrarily, and forget about the other components of
$\hat b$. It is enough to show that the so constructed $b_m$
are non-conjugate.

One can see with the help of lemma \ref{a_3'} that 
$a_4$ will not be helpful in distinguishing $L_{b_m}$, and we
turn to $L_{b_m^2}$. Now, in the case of $n$ odd, $C'$ is an even
(length) cycle, and we choose again one of the two components in the
closure of the subbraid $b'^2$ of $b^2$ whose permutation is $C'^2$.
This requires to treat the cases $n$ even and
odd with a little difference. Let $K'$ be the component of $\hat b'^2$
for $n$ even, or the one chosen component for $n$ odd.
And let $L'_{b^2_m}$ be the result of deleting the one component
in $L'_{b^2_m}$ for $n$ odd and $L_{b^m_2}$ for $n$ even.

We then use the argument in the proof of proposition \ref{l13},
which must be modified followingly.

Similarly to \eqref{app3}, we can achieve the form
\begin{equation}\label{5.0}
b\,=\,\ap'\cdot\sg_1\cdot \bt'\cdot\bt_0\,,
\end{equation}
with $\bt_0$ being now a `band braid' between strands $2$ and $n$
\begin{equation}\label{5.7}
\bt_0\,=\,\sg_2\cdot\ldots\cdot\sg_{n-2}\cdot
\sg_{n-1}\cdot\sg_{n-2}^{-1}\cdot\ldots\cdot \sg_2^{-1}\,.
\end{equation}
Next, the bands of $L'_{b^2_m}$ can be eliminated and ignored
(for $\gm$ now with the additional help of lemma \ref{a_3'})
as before, except that the band switch between $\th\eps$ and
$\dl\zeta$ in \figurename \ \reference{mtwists3} requires a more
careful analysis.

Now the links $L_{1}$ to $L_4$ (for fixed $m$) on the right of
\eqref{app2} have 4 components (and indices of $a_*$ have shifted
up by one). Let (for fixed $m$ and $i$) $K_1$ and $K_2$ be the
components of $L_i$ at the smoothed crossing, $K_0$ be the axis,
and $K_3=K'$ the other component (coming from the second cycle in
$\pi(b)$). Then the graph of linking numbers of $L_i$ looks like:
\begin{equation}\label{2}
\diag{1cm}{4.5}{4.5}{
 \small
 \pictranslate{1 1}{
  \point{0 0}
  \point{0 3}
  \point{3 0}
  \point{3 3}
  \picline{0 0}{3 3}
  \picline{3 0}{3 3}
  \picline{0 3}{3 3}
  \picline{0 3}{0 0}
  \picline{3 0}{0 0}
  \piccirclearc{1.2 1.2}{2.2}{125 325}
  \pictranslate{1.4 1.78}{
    \picrotate{45}{
      \picputtext{0 0}{$m+*$}
    }
  }
  \picputtext{-0.7 -0.7}{$n-3$}
  \pictranslate{-0.3 1.5}{
    \picrotate{90}{
      \picputtext{0 0}{$\pm m(n-3)+*$}
    }
  }
  \picputtext{1.5 3.3}{$\mp m(n-3)+*$}
  \picputtext{2.8 1.5}{$x$}
  \picputtext{1.5 0.25}{$3-x$}
  \picputtext{0.3 -.3}{$K_2$}
  \picputtext{3.4 3.1}{$K_1$}
  \picputtext{3.4 0.1}{$K_0$}
  \picputtext{-.3 3.3}{$K_3$}
 }
}
\end{equation}
Herein $0\le x\le 3$ is independent on $m$ and $n$, and `$*$'
means a (possibly different at every occurrence) term of the sort
\[
\ap_{0}+\sum_{k=1}^6\,\ap_{k}\lm_k\,,
\]
where $\ap_i$ are independent on $m$ and $n$, and $\lm_i$
are certain linking numbers in $\ap'$ and $\bt'$, which we
will specify shortly.

Let $s_1,\,s_2,\,s_n$ be the
strands $1,2,n$ in $b$ in the parts which enter in $\ap'$ and $\bt'$.
\[
\quad\diag{8mm}{3}{3.2}{
  \picvecline{0.5 2.4}{0.5 0}
  \picvecline{1.0 2.4}{1.0 0}
  \picvecline{2.5 2.4}{2.5 0}
  \picputtext{1.75 2.2}{.\es.\es.}
  \picputtext{1.75 0.2}{.\es.\es.}
  \picputtext{0.5 2.8}{$s_1$}
  \picputtext{1.0 2.8}{$s_2$}
  \picputtext{2.5 2.8}{$s_{n-1}$}
  \picfilledbox{1.5 1.2}{3 1.2}{$\ap$}
}
\qquad
\quad\diag{8mm}{3}{3.2}{
  \picvecline{0.5 2.4}{0.5 0}
  \picvecline{2.5 2.4}{2.5 0}
  \picputtext{0.5 2.8}{$s_2$}
  \picputtext{2.5 2.8}{$s_{n}$}
  \picputtext{1.5 2.2}{.\es.\es.}
  \picputtext{1.5 0.2}{.\es.\es.}
  \picfilledbox{1.5 1.2}{3 1.2}{$\bt$}
}
\]
(That is, the strand numbering is given at the place of $\ap'$ and
$\bt'$, and $s_2$ in $\ap'$ and $\bt'$ may be different strands of $b$.)
There are 6 types of linking numbers referred to above:
\[
\begin{array}{ccc}
\lm_1\,:=\,lk(s_1,K')  \mbox{ in }\ap'\,, & 
\lm_2\,:=\,lk(s_1,s_2) \mbox{ in }\ap'\,, & 
\lm_3\,:=\,lk(s_2,K')  \mbox{ in }\ap'\,, \\[0.4mm]
\lm_4\,:=\,lk(s_2,K')  \mbox{ in }\bt'\,, & 
\lm_5\,:=\,lk(s_n,K')  \mbox{ in }\bt'\,, &
\lm_6\,:=\,lk(s_2,s_n) \mbox{ in }\bt'\,. \\
\end{array}
\]
Here $K'$ means the strands of $b$ closing to $K'$ in the
parts within $\ap'$ and $\bt'$, and the linking number is
as explained in \S\reference{Bc}.

One can easily conclude from \eqref{2}, and because the signs of
$a_3(L_i)$ on the right of \eqref{app2} are opposite in pairs,
that $a_4(L'_{b^2_m})-a_4(L'_{b^2})$
for fixed $n$, $\ap'$ and $\bt'$ is a quadratic polynomial in $m$,
with a quadratic term of the form
\[
\sum_{j=0}^2\,\Bigl(\,\ap_{j,0}+\sum_{k=1}^6\,\ap_{j,k}\lm_k\,\Bigr)
\,n^j\,,
\]
where $\ap_{j,k}$ do not depend on $m$ and $n$.

One can determine $\ap_{j,k}$ by explicit calculation of
$a_4(L'_{b^2_m})$ for small $n$ and $m$. We have to distinguish between
even and odd $n$. We have then to realize seven 6-tuples
$(\lm_k)_{k=1}^6=(\dl_{k,l})_{k=1}^6$
(with Kronecker's delta) for $l=0,\dots,6$ by simple braids $\ap'$ and
$\bt'$, and for $n$ odd take care how the component deletion
between $L_{b^2_m}$ and $L'_{b^2_m}$ affects
these braids (the result is no longer always a braid square).

We need to take 3 different $n$ of either parity, and $|m|\le 1$,
but we calculated many additional links for consistency checks. The
outcome of this calculation is, with $\lm:=\lm_1+\lm_3+\lm_4+\lm_5$,
\begin{equation}\label{__}
[a_4(L'_{b_m^2})]_{m^2}\,=\,\left\{
\,\begin{array}{l@{\,}ll}
2(k+1)^2  & \mathbin{+} (4+5k+k^2)  \lm & \mbox{if \ }n=5+2k \\[0.6mm]
2(2k+1)^2 & \mathbin{+} (8+20k+8k^2) \lm & \mbox{if \ }n=4+2k \\
\end{array}
\right.\,.
\end{equation}

Now we are done, unless this term becomes zero for
some integer value of $\lm$. (By asymptotics, this
cannot occur for large $n$, but it does occur for $n=9$.)

To get disposed of these final cases, we consider the
mirrored braids of $b_m$. (Or alternatively, we
reverse the orientation of the braid axis.) This mirrors
the braids $\ap'$ and $\bt'$, but up to a correction
factor needed to restore the shape \eqref{5.0} with
$\bt_0$ in \eqref{5.7}. In total, mirroring $b$ changes
$\lm_i$ to $c_i-\lm_i$, where $c_1=c_3=0$, $c_2=c_5=c_6=-1$
and $c_4=1$. (The $c_i$ are the linking numbers
of the strands within these correction factors.)
Therefore, $\lm$ just changes sign.

Now for either mirroring (or orientations of the braid 
axis) the expression in \eqref{__} vanishes, only if
the absolute term in $\lm$ does so. But this
is clearly never the case. Thus up to mirroring we achieve
the desired distinction, and the proof is concluded. 
\end{proof}

\section{The second case of links}

The situation when $1$ and $n$ belong to distinct cycles of
$\pi(b)$ is the final case needed to complete the proof of theorem
\reference{main'}.

\begin{thm}\label{tfc}
Let $b\in B_n$ admit an exchange move, and let $1$ and $n$
belong to distinct non-trivial cycles of $\pi(b)$. Then infinitely many
of the $b_m$ are non-conjugate.
\end{thm}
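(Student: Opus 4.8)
The plan is to reduce, by a sublink argument, to the case where $\hat b$ has exactly two components, and then to run the same two-tier scheme — first order in $a_4$, and, if necessary, second order via $b^2$ — that settled the knot cases in Propositions \ref{l12} and \ref{l13} and the same-cycle link case in Theorem \ref{th_l}.

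First I would cut down the number of components. Let $C_1$ be the cycle of $\pi(b)$ containing $1$ and $C_n$ the one containing $n$; by hypothesis these are distinct and both non-trivial. Take the subbraid $b'$ on the strands in $C_1\cup C_n$. Since the exchange-move shape of \figurename~\ref{braid} is local to strand $1$, strand $n$ and the middle block, and since the restriction of the full twist $\Dl_{[2,n-1]}^{2m}$ to a subcollection of the middle strands is again the full twist on those strands, $b'$ again admits an exchange move, and the braids $b'_m$ are exactly the subbraids of $b_m$. Consequently $L_{b'_m}$ is a sublink of $L_{b_m}$, so if the $L_{b'_m}$ are mutually non-isotopic for infinitely many $m$, then so are the $L_{b_m}$, and the $b_m$ are non-conjugate. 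Thus I may assume $\hat b=\hat{C_1}\cup\hat{C_n}$ is a two-component link, with $C_1$ and $C_n$ of lengths $p+1$ and $q+1$ where $p,q\ge 1$; the axis link $L_{b_m}$ is then a three-component link, and the relevant coefficient is $a_4=a_{[1]}$ in the notation of Lemma \ref{a_3'}.

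Next I would run the first-order argument, as in the proof of Proposition \ref{l12}: interpolate between $L_{b_m}$ and $L_{b_{m-1}}$ by a sequence of delta moves, and measure the change of $a_4$ through Lemma \ref{a_3'} and Hoste's formula (Theorem \ref{th}). Because all three pairwise linking numbers of $L_{b_m}$ are invariants of the fixed link $\hat b\cup k$, hence independent of $m$, the lowest coefficient $a_{p-1}$ is constant and useless, whereas each delta step contributes a constant to $a_4$, so $a_4(L_{b_m})$ is an arithmetic progression in $m$. A bookkeeping of these contributions in terms of the positions of $1$ and $n$ within $C_1$ and $C_n$, exactly as the sum $\sum_j(2j-n-1)$ arose in \S\ref{S4}, should show the common difference is nonzero except possibly in one residual parity configuration; outside that configuration the proof is complete. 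In the residual case I would pass to $b^2$, following Proposition \ref{l13} and the link computation of Theorem \ref{th_l}: squaring may split the even-length cycles, so — as with $L'_{b^2_m}$ in Theorem \ref{th_l} — I keep one component per cycle. After normalizing $\ap$ and $\bt$ to a form analogous to \eqref{5.0}--\eqref{5.7} and cancelling the circling bands (using Lemma \ref{a_3'} to discard the band $\gm$), I reduce via \eqref{app2} to showing that the quadratic coefficient $[a_4(L'_{b^2_m})]_{m^2}$ is nonzero. As in \eqref{__}, this coefficient should be an explicit quadratic in the cycle lengths plus a term linear in a combined linking parameter $\lm$ of the free braids $\ap'$, $\bt'$; computing it for small $n$ of each parity via \eqref{app2} and Theorem \ref{th} fixes the coefficients, and the finitely many integer values of $\lm$ that would annihilate it are removed by passing to the mirror image (equivalently, reversing the axis orientation), which flips the sign of $\lm$ while leaving the $\lm$-free term intact.

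The main obstacle I expect is the second-order $b^2$ analysis in the residual case, and specifically two sub-points. First, carrying out the band-switch of \figurename~\ref{mtwists3} cleanly when the two special strands $1$ and $n$ now lie on \emph{distinct} components, so that the bookkeeping of which band belongs to which component, and hence which linking numbers appear in the graph \eqref{2}, must be re-done rather than transcribed. Second, guaranteeing that the quadratic leading term cannot vanish for every integer $\lm$ together with its mirror, which is exactly the sign-control point that made the mirroring step indispensable in Theorem \ref{th_l}. The first-order bookkeeping and the component-splitting under squaring are routine adaptations of \S\ref{S4}--\S\ref{S5}; it is the nonvanishing of the leading coefficient, pinned down through mirroring, where the argument must be made airtight.
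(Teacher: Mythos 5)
Your first tier rests on a claim that is false in this case: $a_4(L_{b_m})$ is \emph{not} an arithmetic progression in $m$. The constant-difference bookkeeping of \S\ref{S4} worked because the linking numbers entering Hoste's formula there were mere string counts, independent of $m$; here, once you smooth a crossing, the intermediate links acquire a linking number that grows \emph{linearly} in $m$ (the strands of the two cycles wind around each other through $\kp^{\pm m}$), and Hoste's formula multiplies linking numbers. The paper's Lemma \ref{lemm1} makes this precise: one skein recursion in $m$ produces the two-component links $L_{m,i}$, a second produces three-component links $L_{m,i,l}$ with exactly one linking number linear in $m$, and the two summations over $m'<m$ make $m\mapsto a_4(L_{b_m})$ a \emph{cubic} polynomial, with quadratic term depending on the linking numbers inside $\ap'$, $\bt'$. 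So your dichotomy into ``generic'' cases settled at first order and a single ``residual parity'' case does not exist, and the fallback you propose for it --- transplanting the $b^2$ analysis of Proposition \ref{l13} and Theorem \ref{th_l} --- is not what is needed and has no evident reason to succeed when $1$ and $n$ lie on distinct cycles; the paper never passes to $b^2$ in this case.

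What the actual proof must do, and your outline omits entirely, is first kill the cubic term: by Lemma \ref{lemm1} it is independent of $\ap,\bt$, and for trivial $\ap',\bt'$ the symmetry \eqref{xm}, $L_{b_m}\simeq L_{b_{-m}}$ up to overall orientation reversal, forces it to vanish (Lemma \ref{lemm2}); since you never detect terms of degree above one, you never confront this. Second, the quadratic term is controlled not by your Theorem-\ref{th_l}-style argument (an explicit formula linear in a parameter $\lm$, with finitely many bad integer values excluded because mirroring flips the sign of $\lm$), but by the exact mirror-sum identity of Lemma \ref{lmm3}:
\[
[a_4(L_{b_m})]_{m^2}+[a_4(L_{\ol{b}_m})]_{m^2}\,=\,2(n_1-1)(n_2-1)\,,
\]
in which the dependence on \emph{all} linking numbers of $\ap',\bt'$ cancels between $b$ and $\ol{b}$ (each such number changes to its negative plus a correction, so the sum is constant), and the right-hand side is nonzero precisely because both cycles are non-trivial ($n_1,n_2\ge 2$). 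Hence for each $b$ at least one of the two mirror families has nonvanishing quadratic term, which suffices since conjugate braids have conjugate mirrors. Your mirroring step has the right instinct but the wrong mechanism, and the computation that makes the identity work --- the permutation-cycle-length bookkeeping of the four families \eqref{4fam} via \eqref{i8}, reducing $a_2$ evaluations to products of cycle lengths --- is absent from your plan.
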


\proof
Let $n_1$ be the length of the cycle of $\pi(b)$ containing $1$,
and $n_2$ the length of the cycle containing $n$.

By a sublink argument, and by adjusting the permutations of
the cycles involving $1$ and $n$, it is enough to consider 
$b$ in \figurename \ \reference{braid}, where $\alpha, \beta$
are given by 
\begin{equation}\label{repa}
\ap\,=\,\sg_1\cdot\ldots\cdot\sg_{n_1-1}\cdot\ap'\quad
\mbox{and}
\quad\bt\,=\,\sg_{n_1+1}\cdot\ldots\cdot\sg_{n-1}\cdot\bt'\,,
\end{equation}
and $\ap'$ and $\bt'$ are pure braids. In particular, $n_1+n_2=n$,
that is, $\pi(b)$ has only the two relevant cycles.

We will evaluate $a_4(L_{b_m})$ for fixed $\ap$ and $\bt$
as a (polynomial) function in $m$. (Note that $L_{b_m}$ is
a 3-component link.)
Let us from the outset take the attitude that the linear and
absolute term in $m$ are irrelevant.

Throughout the treatment of this final case,
we use the description of the exchange move in \eqref{alexm}.

\begin{lem}\label{lemm1}
The function $m\mapsto a_4(L_{b_m})$ is a cubic polynomial in
$m$. The cubic term does not depend on $\ap,\bt$.
The quadratic term depends on $\ap,\bt$
only via linear combinations of linking numbers of strands in
$\ap',\bt'$.
\end{lem}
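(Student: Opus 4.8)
The plan is to mirror the analysis of \S\ref{S4}--\S\ref{S5}, now one level higher in the Conway polynomial. Since $1$ and $n$ lie in distinct cycles and, by \eqref{repa}, these are the only cycles, the closure $\hat b$ has two components and $L_{b_m}$ is a $3$-component link; thus $a_4=a_{[1]}$ is the relevant \emph{second} nontrivial coefficient (the lowest being $a_2=a_{[-1]}$, while $a_3$ vanishes). First I would fix the normal form \eqref{repa} and then simplify the axis link $L_{b_m}$ diagrammatically exactly as in \figurename~\ref{mtwists2}--\ref{mtwists3}, using lemma \ref{a_3'} in place of the bare skein relation to discard those bands whose removal leaves $a_{[1]}$ unchanged. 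This localizes the entire $m$-dependence to a single band $\gm$ that wraps the middle $n-2$ strands once per unit of $m$.

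Next I would study the first difference $\Delta(m):=a_4(L_{b_{m+1}})-a_4(L_{b_m})$. Comparing the two diagrams, which differ by one extra wrap of $\gm$, and resolving its crossings by the skein relation \eqref{nbs} as in \eqref{app2}, I would express $\Delta(m)$ as a signed sum $\sum_i\pm a_3(L_0^{(i)})$, where each $L_0^{(i)}$ is a $4$-component link obtained by changing some crossings and taking the oriented smoothing of one self-crossing of a braid component, so that the $z$-degree drops by one while the component number rises by one. For such $4$-component links $a_3$ is the \emph{lowest} coefficient, so theorem \ref{th} applies and gives $a_3(L_0^{(i)})=\sum_T\prod_{(k,l)\in T}l_{k,l}$, the sum ranging over spanning trees $T$ of the complete graph on the four components.

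The degree count is then purely combinatorial. The only linking numbers that vary with $m$ are those \emph{between} braid pieces, and they vary linearly, since each full twist of the middle strands adds $1$ to the linking of every pair of its strands; the linking numbers to the axis equal strand counts and are independent of both $m$ and $\ap',\bt'$. A spanning tree of the four components has three edges but must meet the axis vertex, so it uses at most two internal (non-axis) edges; hence each tree product is at most quadratic in $m$, each $a_3(L_0^{(i)})$ is at most quadratic, and $\Delta(m)$ -- and therefore $a_4(L_{b_m})$ -- is a cubic polynomial. The leading ($m^2$) term of $\Delta$ comes only from trees whose two internal edges are both $m$-linear; its coefficient is a product of a strand count with the two twist-multiplicities, none of which involve $\ap',\bt'$, so the cubic coefficient of $a_4(L_{b_m})$, being one third of this leading term, is independent of $\ap,\bt$.

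Finally, for the quadratic term I would extract the linear ($m^1$) part of $\Delta$: here one internal edge supplies the $m$-factor while the two remaining edges contribute constants, and since a tree cannot use all three internal edges, at least one constant edge meets the axis and carries a strand count, leaving at most one internal constant edge free to depend on the braiding of $\ap',\bt'$. Thus the linear part of $\Delta$ is an affine-linear function of the linking numbers $\lm_k$ occurring in $\ap',\bt'$, and the standard summation formula, together with the $\ap,\bt$-independence of the leading term of $\Delta$, shows that the quadratic coefficient of $a_4(L_{b_m})$ depends on $\ap,\bt$ only through such a linear combination. The main obstacle is the bookkeeping of the second paragraph: verifying that the diagrammatic reduction genuinely confines the $m$-dependence to $\gm$, that the skein resolution produces only the $4$-component splitting smoothings claimed -- so that one always lands on a lowest Conway coefficient -- and that the twist-multiplicities entering the leading term are combinatorial invariants of the cycle structure rather than of $\ap',\bt'$.
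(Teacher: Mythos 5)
There is a genuine gap, and it sits exactly where you flagged your ``main obstacle'': your reduction hinges on the claim that each smoothing in the skein resolution of the $m$-dependence is the oriented smoothing of a \emph{self}-crossing of a braid component, so that each $L_0^{(i)}$ has $4$ components, $a_3$ is its \emph{lowest} Conway coefficient, and theorem \ref{th} applies after a single application of \eqref{nbs}. In the setting of theorem \ref{tfc} this is false. The closure $\hat b$ has two components, and the strands encircled by the exchange-move twist (the wraps of $\kappa$ in \eqref{alexm}) belong to \emph{both} cycles of $\pi(b)$: among the middle strands $2,\dots,n-1$ there are $n_1-1$ from the cycle of $1$ and $n_2-1$ from the cycle of $n$. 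Hence the crossings one must change and smooth to pass from $b_m$ to $b_{m-1}$ unavoidably include crossings \emph{between} the two closure components, and smoothing such a crossing merges components: one lands on $2$-component links, for which $a_3$ is the second coefficient ($a_1$ is the lowest), and Hoste's formula says nothing. This is precisely what happens in the paper: the first difference \eqref{one} produces the $2$-component links $L_{m-1,i}$, the paper explicitly notes ``the complication now is that the links $L_{m,1}$ have two components,'' and it therefore applies the skein relation a \emph{second} time, \eqref{two}, passing to second differences and to $a_2$ of the $3$-component links $L_{m,i,l}$, $L_{m,i,\bar l}$, where Hoste's theorem finally applies. The correct degree count then runs through spanning trees of $K_3$, not $K_4$: each $a_2$ is linear in $m$ with leading term independent of $\alpha',\beta'$, one summation makes $a_3(L_{m,i})$ quadratic, a second makes $a_4(L_{b_m})$ cubic, and the quadratic term picks up the linking numbers of $\alpha',\beta'$ linearly. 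Your $K_4$ tree combinatorics, while internally consistent, has no foundation once the $4$-component claim fails.

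A secondary problem is that your diagrammatic preprocessing is transplanted from the wrong part of the paper: the band picture with $\gamma,\delta,\zeta,\theta,\varepsilon$ (\figurename~\ref{mtwists2}--\ref{mtwists3}) belongs to the analysis of $L_{b_m^2}$ in proposition \ref{l13}, and lemma \ref{a_3'} is used in the proof of theorem \ref{th_l}; neither is invoked for theorem \ref{tfc}, where the paper works directly with the algebraic form $b_m=\alpha\kappa^m\beta\kappa^{-m}$ of \eqref{alexm}, which is what makes the two-level skein bookkeeping \eqref{one}--\eqref{two} tractable. Moreover there is no $a_4$-analogue of the delta-move lemma \ref{a_3} available here to ``discard bands,'' so the claimed localization of all $m$-dependence to a single band $\gamma$ would itself need a new argument; and polynomiality for \emph{all} integers $m$ (not just $m>0$) requires the paper's device of absorbing a power of $\kappa$ into $\alpha$ and $\beta$, which your first-difference setup does not address.
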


\proof It is enough to work with $m>0$. Otherwise we can
multiply $\ap$ and $\bt$ by a proper power of $\kp$.
The argument we give below for $m>0$ applied on the modified $\ap$
and $\bt$ will give the result for the original $\ap$ and $\bt$
for $m>-k$, where $k$ can be chosen arbitrarily. Thus the
property holds then for all integers $m$.

We describe a method for doing a recursive skein calculation
of $a_4(L_{b_m})$, which will be relevant also after the proof
of the lemma. This calculation will be crucial throughout the
treatment, and we will gradually refine it.

We consider $a_4(L_{b_m})-a_4(L_{b_{m-1}})$, where by \eqref{alexm}
\[
b_m=\ap\kp^m\bt\kp^{-m}\,.
\]
Now we can write
\begin{eqnarray}\label{a12}
b_m & = & \ap\kp^{m-1}(\sg_1\cdot\ldots\cdot\sg_{n-2}\sg_{n-1})
(\ul{\sg_{n-1}^{-1}}\sg_{n-2}\cdot\ldots\cdot\sg_1)\bt \\
\nonumber & = & (\sg_1^{-1}\cdot\ldots\cdot\sg_{n-2}^{-1}\sg_{n-1}^{-1})
(\ul{\sg_{n-1}}\sg_{n-2}^{-1}\cdot\ldots\cdot\sg_1^{-1})\kp^{1-m}\,.
\end{eqnarray}
Then we have by the skein relation \eqref{Lpm0}
\begin{equation}\label{one}
a_4(L_{b_m})-a_4(L_{b_{m-1}})\,=\,-a_3(L_{m-1,1})+a_3(L_{m-1,2})\,,
\end{equation}
where $L_{m-1,i}$ is the axis link of the braid obtained from
the word on the right of \eqref{a12} by omitting the underlined
occurrences of $\sg_{n-1}^{-1}$ resp.\ $\sg_{n-1}$. Let us write
$[b]$ for $L_b$. Then
\begin{eqnarray}
L_{m,1} & = &\label{m1br}
[\ap\kp^{m}(\sg_1\cdot\ldots\cdot\sg_{n-2}\sg_{n-1})
(\sg_{n-2}\cdot\ldots\cdot\sg_1)\bt \\
\nonumber
 & & (\sg_1^{-1}\cdot\ldots\cdot\sg_{n-2}^{-1}\sg_{n-1}^{-1})
(\sg_{n-1}\sg_{n-2}^{-1}\cdot\ldots\cdot\sg_1^{-1})\kp^{-m}]\,.
\\
L_{m,2} & = &\label{m2br}
[\ap\kp^{m}(\sg_1\cdot\ldots\cdot\sg_{n-2}\sg_{n-1})
(\sg_{n-1}\sg_{n-2}\cdot\ldots\cdot\sg_1)\bt\\
\nonumber
 & & (\sg_1^{-1}\cdot\ldots\cdot\sg_{n-2}^{-1}\sg_{n-1}^{-1})
(\sg_{n-2}^{-1}\cdot\ldots\cdot\sg_1^{-1})\kp^{-m}]\,.
\end{eqnarray}

The complication now is that the links $L_{m,1}$ have
two components. We need to apply the skein relation once more
before we can use Hoste's formula.

We will calculate instead of $a_3(L_{m-1,i})$ the
difference
\begin{equation}\label{d12}
a_3(L_{m-1,i})-a_3(L_{0,i})\,.
\end{equation}
The extra terms $a_3(L_{0,i})$ contribute
only something absolute in $m$ to $a_4(L_{b_m})-a_4(L_{b_{m-1}})$,
and hence only something linear in $m$ to $a_4(L_{b_m})$,
which we decided to ignore.

It is clear that one can determine \eqref{d12} by evaluating 
\[
a_3(L_{m,i})-a_3(L_{m-1,i})\,.
\]
For this we turn around two groups of $n-2$ crossings, namely
those needed to trivialize the last of the $m$ copies of $\kp$
before $\bt$ in \eqref{a12} (note that we shifted $m-1$ to $m$)
and the first of the $m$ copies of $\kp^{-1}$ after $\bt$.
We obtain
\begin{equation}\label{two}
a_3(L_{m,i})-a_3(L_{m-1,i})=\sum_{l=2}^{n-1}a_2(L_{m,i,l})-
a_2(L_{m,i,\bar l})\,.
\end{equation}
The link $L_{m,i,l}$ is the axis link of the braid
obtained from the braid in \eqref{m1br} (for $i=1$) or 
\eqref{m2br} (for $i=2$) by replacing the
last copy of $\kp$ before $\bt$ by 
\begin{equation}\label{qb}
\sg_1\dots\sg_{l-2}\sg_{l-1}\sg_{l-2}\dots\sg_1\,,
\end{equation}
and $L_{m,i,\bar l}$ is the axis link of the braid
obtained from the braid in \eqref{m1br} resp.\ \eqref{m2br}
by replacing the
first copy of $\kp^{-1}$ after $\bt$ by the
inverse of the braid in \eqref{qb}.

Now $L_{m,i,l}$ and $L_{m,i,\bar l}$ have three components, and their
$a_2$ can be evaluated by Hoste's formula.

Two of the linking numbers of the components of $L_{m,i,l}$ and
$L_{m,i,\bar l}$ are independent of $m$, and the third one
is linear in $m$, with linear term independent of $\ap'$,
$\bt'$. From this the claim of the lemma follows. \qed

\begin{lem}\label{lemm2}
In the function $m\mapsto a_4(L_{b_m})$ of lemma \ref{lemm1}, the
cubic term vanishes.
\end{lem}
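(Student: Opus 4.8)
The plan is to track the top-degree coefficient through the recursive skein calculation already set up in the proof of Lemma~\ref{lemm1}, and to reduce the statement to a single equality of leading slopes. Since by Lemma~\ref{lemm1} the cubic term of $m\mapsto a_4(L_{b_m})$ is independent of $\ap,\bt$, I am free to read it off from the leading-order behaviour of the recursion, and (as in that proof) it suffices to treat $m>0$. Writing $f(m)=a_4(L_{b_m})$, equation \eqref{one} gives $f(m)-f(m-1)=-a_3(L_{m-1,1})+a_3(L_{m-1,2})$, while \eqref{two} gives $a_3(L_{m,i})-a_3(L_{m-1,i})=\sum_{l=2}^{n-1}\bigl(a_2(L_{m,i,l})-a_2(L_{m,i,\bar l})\bigr)=:h_i(m)$. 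As each $h_i$ is linear in $m$, say $h_i(m)=p_im+q_i$, a first summation makes $a_3(L_{m,i})$ quadratic with leading term $\tfrac{p_i}{2}m^2$, and a second summation makes $f$ cubic with cubic coefficient $\tfrac16(p_2-p_1)$. Thus Lemma~\ref{lemm2} is \emph{equivalent} to the single identity $p_1=p_2$.

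Next I would make the two slopes explicit. Each $L_{m,i,l}$ and $L_{m,i,\bar l}$ is a $3$-component link $A\cup C_1\cup C_2$ with $A$ the braid axis, and, as already observed, exactly one of its linking numbers, namely $lk(C_1,C_2)$, is linear in $m$; write its slope $\omega_{i,l}$ (resp.\ $\bar\omega_{i,l}$). Hoste's formula (Theorem~\ref{th}) reads $a_2=lk(A,C_1)\,lk(A,C_2)+lk(C_1,C_2)\bigl(lk(A,C_1)+lk(A,C_2)\bigr)$, so the coefficient of $m$ in $a_2(L_{m,i,l})$ is $\omega_{i,l}\bigl(lk(A,C_1)+lk(A,C_2)\bigr)$. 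The crucial simplification is that $lk(A,C_1)+lk(A,C_2)=lk(A,C_1\cup C_2)=n$: the axis of a closed braid links the rest once per strand, and every operation performed above (oriented smoothings of braid crossings, crossing changes, and the omission of a single $\sg_{n-1}^{\pm1}$ in \eqref{a12}) keeps the diagram a closed $n$-braid and hence leaves this total linking equal to $n$. Consequently $p_i=n\sum_{l}(\omega_{i,l}-\bar\omega_{i,l})$, and $p_1=p_2$ reduces to the purely combinatorial assertion that the two skein branches carry the same aggregate twisting slope, $\sum_l(\omega_{1,l}-\bar\omega_{1,l})=\sum_l(\omega_{2,l}-\bar\omega_{2,l})$.

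Finally I would establish this last identity. The slope $\omega_{i,l}$ is governed by how the full twists $\kp^{m}$ distribute the middle strands over the two components, i.e.\ by the product of the numbers of twisted strands lying in $C_1$ and in $C_2$ under the permutation of the modified braid; the branches $L_{m,1,\ast}$ and $L_{m,2,\ast}$ differ only by the local transposition $(n-1,n)$ arising from omitting $\sg_{n-1}$ versus $\sg_{n-1}^{-1}$, well away from the region that generates the $m$-linear linking. I expect this to be the main obstacle: one must verify that, although this local change can alter the \emph{individual} partitions, it leaves the $l$-summed slope unchanged. I would do this by reading off the two strand-partitions directly from the cycle data of $\pi(b)$ and matching the sums term by term after a suitable reindexing in $l$. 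As an independent safeguard, and mirroring the computer-assisted check used for Proposition~\ref{l13}, one may instead note that the cubic coefficient is a fixed function of $(n_1,n_2)$ that is independent of $\ap,\bt$, evaluate $a_4(L_{b_m})$ for a few small $(n_1,n_2)$ with $|m|\le1$, and then propagate the result to all cases via the shift in $n_1$ and $n_2$.
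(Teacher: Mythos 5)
Your reduction is sound as far as it goes: the double summation of the skein recursion correctly shows that the cubic coefficient of $a_4(L_{b_m})$ equals $\tfrac16(p_2-p_1)$, and your observation that every link in the calculation remains the axis link of an $n$-braid, so that $lk(A,C_1)+lk(A,C_2)=n$ and $p_i=n\sum_l(\omega_{i,l}-\bar\omega_{i,l})$, is correct and even clean. But the proof then stops exactly where the content lies: the identity $\sum_l(\omega_{1,l}-\bar\omega_{1,l})=\sum_l(\omega_{2,l}-\bar\omega_{2,l})$ is never established — you yourself flag it as ``the main obstacle'' and only describe a plan (``reading off the two strand-partitions \dots and matching the sums term by term''). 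Tracking how the slopes $\omega_{i,l}$, $\bar\omega_{i,l}$ depend on which side of the two-component partition the band generator \eqref{qb} straddles, separately for the two branches $i=1,2$, is precisely the kind of case analysis the paper performs later (for the \emph{quadratic} term, in lemma \ref{lmm3}), and it is not routine; asserting that a ``suitable reindexing in $l$'' will match the sums is not a proof. The fallback you offer is also flawed on two counts: computing $a_4(L_{b_m})$ only for $|m|\le 1$ gives three values of a cubic polynomial, which cannot isolate its cubic coefficient (you would need at least four values of $m$); and ``propagating via the shift in $n_1$ and $n_2$'' presupposes that the cubic coefficient is a polynomial of controlled degree in $(n_1,n_2)$, which lemma \ref{lemm1} does not give you — the paper only obtains such a degree bound in the proof of lemma \ref{lmm3}, via a Vassiliev-invariant growth argument citing \cite{St4}, and even there treats it as a heuristic to be backed by exact computation.

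The paper's own proof sidesteps all of this with a short symmetry argument you missed. By lemma \ref{lemm1} the cubic term is independent of $\ap,\bt$, so one may take $\ap'$, $\bt'$ trivial; then $\ap=\sg_1\cdots\sg_{n_1-1}$ and $\bt$ can each be conjugated to their word-reverses without using $\sg_1$ or $\sg_{n-1}$, $\kp$ commutes with $\br{\sg_2,\dots,\sg_{n-2}}$, and a flip of the axis link along a horizontal axis plus reversal of all orientations yields $L_{b_m}\simeq L_{b_{-m}}$ (equation \eqref{xm}). Since the Conway coefficient $a_4$ is unchanged under simultaneous reversal of all components, $m\mapsto a_4(L_{b_m})$ is an even function, and being a cubic polynomial its cubic (odd) term must vanish. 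If you want to salvage your approach, the evenness argument in effect proves your identity $p_1=p_2$ for trivial $\ap',\bt'$, which by your own independence observation suffices; but as written, your proposal has a genuine gap at its central claim.
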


\proof By lemma \ref{lemm1}, it is enough to prove this when $\ap'$
and $\bt'$ are trivial. Under this assumption, we claim the
following:
\begin{equation}\label{xm}
L_{b_m}\simeq L_{b_{-m}}
\end{equation}
up to switching orientation (of \em{all} components simultaneously).
With \eqref{xm} the lemma follows, since the function given there
is even.

To see \eqref{xm}, note that 
\[
\ap=\sg_1\dots\sg_{n_1-1}
\]
can be conjugated to its word-reverse \em{without using} $\sg_1$
and $\sg_{n-1}$, and similarly $\bt$. Then $\kp$ commutes with
the subgroup generated by $\sg_2,\dots,\sg_{n-2}$. After $\ap$
and $\bt$ were reversed, flip the braid axis link by $\pi$ along
the horizontal axis in projection plane, conjugate by $\ap$
to move it to the top, and reverse all orientations (including of the
axis) to have strands pointing downward. \qed

We thus now are led to look at $[a_3(L_{b_m})]_{m_2}$, and our goal
is to prove that is does not vanish.
The skein calculation in the proof of lemma \reference{lemm1}
would be unwieldy.  However, we help ourselves again by taking
also the mirrored braids into account. 

Let $\bar b$ be $b$ where all $\sg_i$ and $\sg_{i}^{-1}$ are
interchanged. Mirroring is an automorphism of $B_n$, so
if two braids are conjugate, so are their mirror images. 
We will thus complete the proof of theorem \reference{tfc},
and hence also of theorem \reference{main'}, by the following lemma.

\begin{lem} We have
\[
[a_4(L_{b_m})]_{m^2}+[a_4(L_{\ol{b}_m})]_{m^2}\,=\,
2(n_1-1)(n_2-1)\,.
\]
\label{lmm3}
\end{lem}

\proof 
By lemma \reference{lemm1} we have that
\[
[a_4(L_{b_m})]_{m^2}
\]
depends only linearly on the linking numbers of $\ap'$ and $\bt'$.
Now, changing a linking number in $\ap'$ for the representation 
\eqref{repa} of $\ap$ changes this linking number oppositely in the
representation of $\ol{\ap}$. We see again that the expression
\begin{equation}\label{BLA}
[a_4(L_{b_m})]_{m^2}+[a_4(L_{\ol{b}_m})]_{m^2}
\end{equation}
does not depend on $\ap$ and $\bt$. We will thus evaluate it
when $\ap$ and $\bt$ are trivial.

We have by \eqref{xm} then
\begin{equation}\label{mpr}
L_{{\ol b}_m}=L_{{\ol b}_{-m}}=L_{\ol{b_m}}\,.
\end{equation}
Now we will follow the skein calculation of the proof
of lemma \ref{lemm1}, simultaneously for $b_m$ and $\ol{b_m}$. 
In order to distinguish the links occurring in the
calculations for $b_m$ and $\ol{b_m}$, we will write
in the latter case $\ol{L}_{\dots}$, with the proper
subscript, for what would have been ${L}_{\dots}$ in the
case of $b_m$.

The skein calculation could be summarized by saying that
we expressed $a_4(L_{b_m})-a_4(L_{b_{m-1}})$ by a linear
combination of terms 
\begin{equation}\label{oi}
a_2(L_{m',i,l})\quad\mbox{ and }\quad a_2(L_{m',i,\bar l})
\end{equation}
for $i=1,2$; $0\le m'<m$; and $2\le l\le n-1$, up to
absolute terms in $m$. If we sum this up to express
$a_4(L_{b_m})$, then we have
something linear in $m$, and then for each of the 4
families in \eqref{oi}:
\begin{equation}\label{4fam}
(1)=L_{m',1,\bar l}\,,\quad (2)=L_{m',2,l}\,,\quad 
(3)=L_{m',2,\bar l}\,,\quad (4)=L_{m',1,l} 
\end{equation}
(determined by the choice $i=1,2$ and between $l$
and $\bar l$), there are 
\begin{equation}\label{oj}
\frac{m^2}{2}+O(m)\
\end{equation}
terms. 

Then each of the terms
\[
a_2(\ol{L}_{m',i,l})\quad\mbox{ and }\quad a_2(\ol{L}_{m',i,\bar l})
\]
enters into the skein calculation for $\ol{b_m}$ with the same
sign as does its analogue in \eqref{oi} for the calculation for
$b_m$. This is because every time a crossing is
smoothed out, the sign changes between $b_m$ and $\ol{b_m}$,
but to get \eqref{oi} we smoothed out two crossings in $b_m$
resp.\ $\ol{b_m}$. Combining the signs in \eqref{one} and
\eqref{two}, we see that the signs of families (1) and (2) in
\eqref{4fam} are positive, for families (3) and (4) negative.

Now, the component linking numbers of $\ol{L}_{m',i,l}$ with the
axis are the same as for ${L}_{m',i,l}$, and the remaining one
linking number is opposite. 

By Hoste's formula, it becomes clear that one half of \eqref{BLA}
can be evaluated by doing again the skein calculation for
$b_m$ only, and therein replacing $a_3(L_{\dots})$ by
\begin{equation}\label{i8}
\br{\pi(b_{\dots})}\,,
\end{equation}
where $b_{\dots}$ is the braid
whose axis link is $L_{\dots}$, and $\br{\sg}$ is the
product of the (here always two) cycle lengths of $\sg$.

Now this simplifies the calculation considerably.  Note first
that $\br{\pi(b_{\dots})}$ does not depend on $m$ or $m'$. Thus
we can evaluate all four families in \eqref{4fam} just by looking at
their permutations. We have then to divide by 2 following \eqref{oj}
to get the $m^2$-term. This can be compensated by the factor 2
explained in the application of Hoste's formula above
\eqref{i8}.

{}From here there are two ways to get done.
A "philosophical" way is to observe that by the
skein calculation, the expression \eqref{BLA}
must be some polynomial in $n_1$ and $n_2$. 
By using that $L_{b_m}$ has $O(m(n_1+n_2))$
crossings, that $a_4$ is a Vassiliev invariant
of degree $4$, and the extension of the Lin-Wang conjecture 
to links in \cite{St4}, we can conclude that the polynomial
is of degree at most $4$. Moreover, the triviality of the
cases $n_i=1$ explains the factor $(n_1-1)(n_2-1)$.
The polynomial must also be symmetric in $n_1$ and $n_2$.
{}From this one can get the formula in the lemma by
calculating the value of the polynomial for a few explicit
$(n_1,n_2)$. (In the realm of ascertaining the result, we
did a few such checks which, via this argument, would
establish the formula alternatively.)

Nevertheless, it is possible to make exact calculation. Now let
us write $(1),\dots,(4)$ in \eqref{4fam} for the contribution
\eqref{i8} of the link in question to $a_4(L_{b_m})$ according
to \eqref{one} and \eqref{two}.

Let $[x,y]$ be the cycle $(y,\,y-1,\,\dots,x)$ and $\sg\tau=\tau
\circ\sg$ be the compositive multiplication of permutations. We have
\begin{eqnarray*}
(1) & = & \br{(1,n)[n_1+1,n](1,l)[1,n_1]} \\
    & = & \br{\left(\begin{array}{cl} l+1 & \mbox{if\es}n\ge n_1+1 \\
    l & \mbox{if\es}l\le n_1\end{array},n\right)(1,n)\pi(\bt)} \\
(2) & = & \br{(1,l)[n_1+1,n](1,n)[1,n_1]} \\
    & = & \br{\left(1,\begin{array}{cl} l & \mbox{if\es}l\le n_1 \\
     n & \mbox{if\es}l=n_1+1 \\ l-1 & \mbox{if\es}l>n_1 \end{array}
      \right)(1,n)\pi(\bt)} \\
(3) & = & \br{[n_1+1,n](1,n)(1,l)[1,n_1] }\\
    & = & \br{(l,n)(1,n)\pi(b)}\,, \\
(4) & = & \br{(1,l)(1,n)\pi(b)}\,.
\end{eqnarray*}

Then for $l\le n_1$ we have $(1)=(3)$ and $(2)=(4)$, and
in the sum over $l>n_1$ of $(1)-(3)$ terms cancel with a
shift of $1$. Similarly for $(2)-(4)$\,.

We have then
\[
\sum_{l}(1)+(2)-(3)-(4)=(1)_{l=n-1}-(3)_{l=n_1+1}+
  (2)_{l=n_1+1}-(4)_{l=n-1}\,.
\]
The two permutations with positive sign are equal to $\pi(b)$, while
the other two have a fixpoint (and a cycle of length $n-1$), and
the result follows.
\qed

With lemma \reference{lmm3}, theorem \reference{tfc}, and therewith
also theorem \reference{main'}, is proved. \qed

\section{Examples, applications and problems}

As a consequence of theorem \reference{th_l}, we obtain the
following result in \cite{St2}.

\begin{cor}
Let $L$ be a composite link of braid index $b(L)\ge 4$, which
factors as $L_1\# L_2$ in such a way, that the components
of either $L_{1,2}$ the connected sum is performed at are knotted.
(E.g.\ any composite \em{knot} $L$ will do.) Then $L$ has
infinitely many non-conjugate minimal braid representations.
\end{cor}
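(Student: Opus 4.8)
The corollary asserts that a composite link $L=L_1\#L_2$ of braid index $\ge 4$, with the connected-sum components on either side knotted, has infinitely many non-conjugate minimal braid representations. The machinery to invoke is Theorem \ref{th_l} (the ``same cycle'' case), so the entire task reduces to producing a single minimal braid representation of $L$ that admits an exchange move \emph{with $1$ and $n$ lying in the same cycle of the braid permutation}.

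**Plan.** The natural strategy is to build the minimal braid of $L$ from minimal braids of the factors and then recognize the connected-sum structure as an exchange-move configuration. First I would recall the classical fact (due to Birman--Menasco) that the braid index is additive under connected sum minus one, $b(L_1\#L_2)=b(L_1)+b(L_2)-1$, so that a minimal braid $b$ of $L$ on $n=b(L)$ strands exists and can be taken to be a ``connected-sum braid'': a minimal braid $b_1\in B_{n_1}$ of $L_1$ and a minimal braid $b_2\in B_{n_2}$ of $L_2$ joined along one shared strand, with $n_1+n_2-1=n$. Next I would show this connected-sum braid has the shape of Figure \ref{braid}, i.e. admits an exchange move. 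The key geometric point is that at the connecting strand one has, on one side, the braid $L_1$ involving strand $1$, and on the other side the braid $L_2$ involving strand $n$, with the middle $n-2$ strands free to receive the full twist $\Delta^{2m}_{[2,n-1]}$; this is exactly the configuration $\alpha\beta$ with $\alpha,\beta\in B_{n-1}$ of Figure \ref{braid}.

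**The same-cycle condition.** The real content is arranging that $1$ and $n$ lie in the \emph{same} cycle of $\pi(b)$. This is where the hypothesis that the connected-sum components are \emph{knotted} enters. The connected sum is performed along one component $C_1$ of $L_1$ and one component $C_2$ of $L_2$; in $L=L_1\#L_2$ these merge into a single component $C$, which is knotted (hence its braid strands form a cycle of length $\ge 2$). I would choose the minimal braid so that strand $1$ belongs to $C_1$'s part and strand $n$ belongs to $C_2$'s part; since after connected sum these are sub-arcs of the \emph{same} component $C$, the corresponding strands all lie in one cycle of $\pi(b)$, and in particular $1$ and $n$ do. The freedom to place $1$ and $n$ on the merged component is exactly what ``the components at which the sum is performed are knotted'' guarantees: a knotted component is represented by at least two strands, so there is room to have the distinguished strands $1$ and $n$ on it. Once this single braid representation is exhibited, Theorem \ref{th_l} applies verbatim and yields infinitely many non-conjugate $n$-braid representations, which are minimal since $n=b(L)$.

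**Main obstacle.** The delicate step is verifying, at the level of the explicit braid word, that the connected-sum braid genuinely has the exchange-move form of Figure \ref{braid} with strands $1$ and $n$ on the glued component, rather than merely ``morally'' resembling it. One must place the two factor braids so that the shared strand sits among the middle $n-2$ strands while the outer strands $1,n$ both feed into the knotted merged component, and check that the resulting $\alpha,\beta$ indeed lie in $B_{n-1}$ as required. The parenthetical remark in the statement, that any composite \emph{knot} works, is the clean special case: a knot has a single component, so $C_1,C_2,C$ are automatically that one component and the same-cycle condition is free --- one only needs knottedness of the factors, which holds since a connected sum of knots has genuinely knotted factors. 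I expect the general (link) case to require just a careful but routine bookkeeping of which strands carry which component, with no new ideas beyond the placement argument above.
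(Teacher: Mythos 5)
Your proposal is correct and takes essentially the same route as the paper: both use Birman--Menasco's $1$-subadditivity of the braid index under connected sum to obtain a composite minimal braid $b$ on $n=b(L)\ge 4$ strands (which therefore admits an exchange move), and both exploit the knottedness of the summand components to guarantee an extra strand of the merged component in each factor braid, which conjugation of the $b_i$ places at positions $1$ and $n$ so that theorem \ref{th_l} applies. The step you flag as the ``main obstacle'' (verifying the exchange-move shape of the composite braid) is exactly what the paper reads off directly from its composite-braid figure, so there is no substantive difference.
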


\begin{proof}
By the 1-subadditivity of the braid index under connected sum proved
by Birman and Menasco \cite{B1}, $L$ has a composite minimal braid
representation $b$, of the sort illustrated in
\figurename \ \ref{torefoil} (where $\hat b_i=L_i$).
Such a representation admits an exchange move if it has $n=b(L)\ge 4$
strands. By assumption, the component of the common strand of $b_1$
and $b_2$ has at least one other strand in either of these. By
conjugation of $b_i$ it can be made to be strand $1$ and $n$ (in
$b$), so that the cycle condition of theorem \ref{th_l} also holds.
\end{proof}

\begin{figure}[htbp]


\begin{center}
\[
\diag{1cm}{2}{2.5}{
 \picvecwidth{0.04}
 \picmultigraphics{5}{0.3 0}{
  \picvecline{0.4 2.5}{0.4 0}
 }
 \picfilledbox{0.7 1.8}{0.8 d}{$b_1$}
 \picfilledbox{1.3 0.8}{0.8 d}{$b_2$}
}
\]
\end{center}
\caption{A composite braid}
\label{torefoil}
\end{figure}

\begin{ex}
We found that
most prime knots $K$ of crossing number $c(K)\le 10$ and $b(K)\ge 4$,
except 7 knots for $c(K)=9$ and 15 knots for $c(K)=10$, have a minimal
representation admitting an exchange move. (See the table in
\S\ref{Stb}.) So these knots have
infinitely many conjugacy classes of minimal braid representations.
\end{ex}

Note that the exchange move in \figurename \ \ref{bwitht} is trivial
when the leftmost strand of $\ap$ or the rightmost strand of $\bt$ are
isolated. We do not know if under exclusion of these obvious cases, the
move can always yield infinitely many conjugacy classes. Certainly,
theorem \ref{main'} gives the weakest condition in terms of $\pi(b)$
alone under which the exchange move can work.

If one likes to investigate the remaining braids, one must be aware
that a construction of Stanford \cite{Stan} allows one to `approximate'
these cases of failure by others which cannot be distinguished by any
number of Vassiliev invariants (including coefficients of $\nb$). With
this insight in advance, we expect little decent outcome in trying
to apply our approach in the excluded case. 

In \cite{St2} we used some Lie group approach which covers some of
these braids when in \figurename \ \ref{bwitht} we have $\bt=\sg_n^
{\pm 1}$. This approach promises no satisfactory adaptation to
exchange moves.

Apart from these difficulties, we conclude with two more remaining
problems.

\begin{prob}
Theorem \ref{main'} suggests to seek braids admitting exchange moves,
but the identification what links have such (minimal) braids is still
difficult.
\end{prob}

\begin{prob}
We do not know how to construct \em{Markov irreducible} $b\in B_n$
with $n>b(L)$, i.e.\ such not conjugate to stabilizations
$b\sg_{n-1}^{\pm 1}$ for $b\in B_{n-1}$. The only examples known, due
to Morton and Fiedler \cite{M2,Fie}, are for $n=4$ and $K=\hat b$
being the unknot.
\end{prob}

\section{Table\label{Stb}}

The below list gives 4-braid representations admitting an exchange
move for 95 knots of braid index 4 (up to mirroring) in the tables of
\cite[appendix]{Rolfsen}.
An integer $i>0$ means $\sg_i$, an $i<0$ stands for $\sg_{-i}^{-1}$.
Note that an $n$-braid word admits an exchange move up to cyclic
permutation if and only if it has no (cyclic) subword of the form 
\begin{equation}\label{fr}
\sg_1^{\pm 1}\cdot\ldots\cdot\sg_{n-1}^{\pm 1}\cdot\ldots
\cdot\sg_{1}^{\pm 1}\cdot\ldots\cdot\sg_{n-1}^{\pm 1}\ldots\,.
\end{equation}

We shifted, as in \cite{BurdeZieschang}, indices down by 1 for
$10_{163},\,\dots,10_{166}$ to discard Perko's duplication. Thus,
e.g., the knot written below as $10_{162}$ is Rolfsen's $10_{163}$.

A further mistake in Rolfsen's tables is that therein $10_{83}$ and
$10_{86}$ are swapped: the Conway notation and Alexander polynomial
for each one refers to the diagram of the other. The convention taken
here is that we interchange Conway notations and Alexander polynomials
to fix the mismatch (as in the corrected reprinting of Rolfsen's
book), and \em{not} the diagrams (as, e.g., in \cite{Kawauchi}).

{\small
\begin{multicols}{2}
\begin{tabbing}
  $10_{163}$\es \= \tt  -3 -3 -2 -2 3 -2 3 3 -1 2 -1 \kill 
  $6_{1}$ \> \tt  -3 -3 -2 1 1 2 -1 3 -2 \\
  $7_{2}$ \> \tt  -3 -3 -3 -1 -1 -2 1 3 -2 \\
  $7_{4}$ \> \tt  -3 -3 -2 -1 -1 -2 1 3 -2 \\
  $7_{6}$ \> \tt  -3 -3 -3 -1 2 -1 2 3 -2 \\
  $7_{7}$ \> \tt  -3 -3 -2 1 -2 1 3 2 2 \\
  $8_{4}$ \> \tt  -3 -3 -3 -1 2 1 1 -3 2 \\
  $8_{6}$ \> \tt  -3 -3 -3 -3 1 -2 1 3 -2 \\
  $8_{8}$ \> \tt  -3 -3 -3 1 -2 1 1 3 -2 \\
  $8_{11}$ \> \tt  -3 -3 -2 -2 1 -2 1 3 -2 \\
  $8_{13}$ \> \tt  -3 -3 -2 1 -2 1 1 3 -2 \\
  $8_{14}$ \> \tt  -3 -3 -3 -2 1 -2 1 3 -2 \\
  $8_{15}$ \> \tt  -3 -3 -2 -2 -3 -1 -1 2 -1 \\
  $9_{4}$ \> \tt  -3 -3 -3 -3 -3 -1 -1 -2 1 3 -2 \\
  $9_{7}$ \> \tt  -3 -3 -3 -3 -2 -2 3 -1 -1 -2 1 \\
  $9_{10}$ \> \tt  -3 -3 -2 -2 -2 -1 -1 -2 1 3 -2 \\
  $9_{11}$ \> \tt  -3 -3 -3 -3 -1 2 -1 -3 2 \\
  $9_{13}$ \> \tt  -3 -3 -3 -3 -2 -1 -1 -2 1 3 -2 \\
  $9_{17}$ \> \tt  -3 -1 2 -1 2 2 2 -3 2 \\
  $9_{18}$ \> \tt  -3 -3 -3 -2 -2 -1 -1 -2 1 3 -2 \\
  $9_{20}$ \> \tt  -3 -3 -3 -1 -1 2 -1 -3 2 \\
  $9_{22}$ \> \tt  -3 -1 2 -1 2 -3 2 2 2 \\
  $9_{23}$ \> \tt  -3 -3 -3 -2 -2 -1 -1 2 -1 3 -2 \\
  $9_{24}$ \> \tt  -3 -3 -1 2 -1 -3 2 2 2 \\
  $9_{26}$ \> \tt  -3 -3 -3 -1 2 -1 2 -3 2 \\
  $9_{27}$ \> \tt  -3 -3 -1 2 -1 2 2 -3 2 \\
  $9_{28}$ \> \tt  -3 -3 -1 -1 2 -1 -3 2 2 \\
  $9_{30}$ \> \tt  -3 -3 -1 2 -1 2 -3 2 2 \\
  $9_{31}$ \> \tt  -3 -3 -1 -1 2 -1 2 -3 2 \\
  $9_{32}$ \> \tt  -3 -3 -1 2 -1 -3 2 -3 2 \\
  $9_{33}$ \> \tt  -3 -1 2 -1 -3 2 -3 2 2 \\
  $9_{36}$ \> \tt  -3 -3 -3 -1 2 -1 -3 -3 2 \\
  $9_{42}$ \> \tt  -3 -3 -3 -1 2 -1 3 3 2 \\
  $9_{43}$ \> \tt  -3 -3 -3 1 -2 1 -3 -3 -2 \\
  $9_{44}$ \> \tt  -3 -3 -3 1 -2 1 3 3 -2 \\
  $9_{45}$ \> \tt  -3 -3 -2 -2 -3 -1 2 -1 2 \\
  $9_{49}$ \> \tt  -3 -3 -2 -2 -3 -3 -1 2 -1 3 -2 \\
  $10_{6}$ \> \tt  -3 -3 -3 -3 -3 -3 1 -2 1 3 -2 \\
  $10_{8}$ \> \tt  -3 -3 -3 -3 -3 -1 2 1 1 -3 2 \\
  $10_{12}$ \> \tt  -3 -3 -3 -3 -3 1 -2 1 1 3 -2 \\
  $10_{14}$ \> \tt  -3 -3 -3 -3 -3 -2 1 -2 1 3 -2 \\
  $10_{15}$ \> \tt  -3 -3 -3 -3 -1 2 1 1 -3 2 2 \\
  $10_{19}$ \> \tt  -3 -3 -3 -3 -1 2 1 1 2 -3 2 \\
  $10_{21}$ \> \tt  -3 -3 -2 -2 -2 -2 1 -2 1 3 -2 \\
  $10_{22}$ \> \tt  -3 -3 -3 -3 1 -2 1 1 1 3 -2 \\
  $10_{23}$ \> \tt  -3 -3 -2 -2 -2 1 -2 1 1 3 -2 \\
  $10_{25}$ \> \tt  -3 -3 -3 -3 -2 -2 1 -2 1 3 -2 \\
  $10_{26}$ \> \tt  -3 -3 -3 -1 2 1 1 2 2 -3 2 \\
  $10_{27}$ \> \tt  -3 -3 -3 -3 -2 1 -2 1 1 3 -2 \\
  $10_{32}$ \> \tt  -3 -3 -3 -2 1 -2 1 1 1 3 -2 \\
  $10_{39}$ \> \tt  -3 -3 -3 -2 -2 -2 1 -2 1 3 -2 \\
  $10_{40}$ \> \tt  -3 -3 -3 -2 -2 1 -2 1 1 3 -2 \\
  $10_{49}$ \> \tt  -3 -3 -3 -3 -2 -2 -3 -1 -1 2 -1 \\
  $10_{50}$ \> \tt  -3 -3 -2 -2 1 -2 -2 -2 1 3 -2 \\
  $10_{51}$ \> \tt  -3 -3 -2 -2 1 -2 -2 1 1 3 -2 \\
  $10_{52}$ \> \tt  -3 -3 -3 -1 2 1 1 2 -3 -3 2 \\
  $10_{54}$ \> \tt  -3 -3 -3 -1 2 1 1 -3 -3 2 2 \\
  $10_{56}$ \> \tt  -3 -3 -3 -2 1 -2 -2 -2 1 3 -2 \\
  $10_{57}$ \> \tt  -3 -3 -3 -2 1 -2 -2 1 1 3 -2 \\
  $10_{61}$ \> \tt  -3 -3 -3 -1 2 1 1 -3 -3 -3 2 \\
  $10_{65}$ \> \tt  -3 -3 -2 1 -2 -2 -2 1 1 3 -2 \\
  $10_{66}$ \> \tt  -3 -3 -3 -2 -2 -2 -3 -1 -1 2 -1 \\
  $10_{72}$ \> \tt  -3 -3 -3 -3 -2 -2 3 -2 1 -2 1 \\
  $10_{76}$ \> \tt  -3 -3 -3 -3 -2 -2 -2 3 1 -2 1 \\
  $10_{77}$ \> \tt  -3 -3 -3 -3 -2 -2 3 1 -2 1 1 \\
  $10_{80}$ \> \tt  -3 -3 -3 -2 -2 -3 -3 -1 -1 2 -1 \\
  $10_{83}$ \> \tt  -3 -3 -2 1 -2 -2 1 -2 1 3 -2 \\
  $10_{84}$ \> \tt  -3 -3 -3 1 -2 -2 1 -2 1 3 -2 \\
  $10_{86}$ \> \tt  -3 -3 -2 1 -2 1 -2 1 1 3 -2 \\
  $10_{87}$ \> \tt  -3 -3 -3 1 -2 1 -2 1 1 3 -2 \\
  $10_{90}$ \> \tt  -3 -3 1 -2 -2 1 1 -2 1 3 -2 \\
  $10_{93}$ \> \tt  -3 -3 -1 2 1 1 -3 2 -3 -3 2 \\
  $10_{102}$ \> \tt  -3 -3 -1 2 1 1 -3 2 -3 2 2 \\
  $10_{103}$ \> \tt  -3 -3 1 -2 -2 1 -2 -2 1 3 -2 \\
  $10_{108}$ \> \tt  -3 -3 -1 2 1 1 -3 -3 2 -3 2 \\
  $10_{114}$ \> \tt  -3 -3 1 -2 1 -2 1 -2 1 3 -2 \\
  $10_{128}$ \> \tt  -3 -3 -3 -2 -2 -3 -1 2 -1 -3 -2 \\
  $10_{129}$ \> \tt  -3 -3 -3 2 2 3 -1 2 -1 3 2 \\
  $10_{130}$ \> \tt  -3 -3 -3 -1 2 1 1 2 3 3 2 \\
  $10_{131}$ \> \tt  -3 -3 -3 -2 -1 -1 -2 1 3 3 -2 \\
  $10_{132}$ \> \tt  -3 -3 -3 -1 -1 -2 1 1 1 3 -2 \\
  $10_{133}$ \> \tt  -3 -3 -3 -2 -2 3 -1 2 -1 3 -2 \\
  $10_{134}$ \> \tt  -3 -3 -3 -2 -2 -3 -3 -1 -1 -2 1 \\
  $10_{135}$ \> \tt  -3 -3 -3 -2 1 1 2 2 1 3 -2 \\
  $10_{140}$ \> \tt  -3 -3 -3 -1 -1 -2 1 3 3 3 -2 \\
  $10_{142}$ \> \tt  -3 -3 -2 -2 -3 -3 -1 2 -1 -3 -2 \\
  $10_{144}$ \> \tt  -3 -3 -2 -2 -3 -1 2 1 1 -3 2 \\
  $10_{150}$ \> \tt  -3 -3 -3 -2 -2 -2 3 3 -1 2 -1 \\
  $10_{151}$ \> \tt  -3 -3 -3 -1 2 -1 2 2 -1 3 -2 \\
  $10_{153}$ \> \tt  -3 -3 -3 1 -2 1 -3 -3 2 2 2 \\
  $10_{154}$ \> \tt  -3 -3 -2 -2 -3 -1 -2 -2 -1 -1 2 \\
  $10_{156}$ \> \tt  -3 -3 -3 -1 2 -1 3 3 -2 3 -2 \\
  $10_{158}$ \> \tt  -3 -3 -3 1 -2 1 3 -2 3 2 2 \\
  $10_{160}$ \> \tt  -3 -3 -3 -2 -2 -3 2 -3 1 -2 1 \\
  $10_{162}$ \> \tt  -3 -3 -2 -2 -3 2 -3 -1 2 1 1 \\
  $10_{163}$ \> \tt  -3 -3 -2 -2 3 -2 3 3 -1 2 -1 \\
\end{tabbing}
\end{multicols}
}

The 22 knots of braid index 4 for which we could not find a 4-braid
admitting an exchange move are:

\begin{tabbing}
\kern0.17\textwidth\= \kern0.17\textwidth\= \kern0.17\textwidth\=
\kern0.17\textwidth\= \kern0.17\textwidth\= \kern0.17\textwidth
\=\kill
$9_{29}$,\> $9_{34}$,\> $9_{38}$,\> $9_{40}$,\> $9_{46}$,\> $9_{47}$,\\
$9_{48}$,\> $10_{92}$,\> $10_{95}$,\> $10_{98}$,\> $10_{111}$,
  \> $10_{113}$,\\
$10_{117}$,\> $10_{119}$,\> $10_{121}$,\> $10_{122}$,\> $10_{136}$,
  \> $10_{145}$,\\
$10_{146}$,\> $10_{147}$,\> $10_{164}$,\> $10_{165}$\,.
\end{tabbing}

Among the 5- and 6-braid knots in the table, all have a minimal
braid admitting an exchange move. This can be checked from the
representations given in \cite{St5}. It is easy to see that the
(cyclic) shape \eqref{fr} (with $n=5,6$) can be avoided after
possibly applying some commutativity relations.

\begin{ack}
This work was carried out under support by the Brain Korea 21 (BK21)
Project of the Ministry of Education \& Human Resources Development of
the Republic of Korea. The authors are grateful to KAIST, Daejeon,
Korea for its hospitality.
\end{ack}

\end{document}